\numberwithin{equation}{section}
\newtheorem{theorem}{Theorem}[section]
\newtheorem{lemma}[theorem]{Lemma}
\newtheorem{proposition}[theorem]{Proposition}
\newtheorem{corollary}[theorem]{Corollary}
\newtheorem{conjecture}[theorem]{Conjecture}
\theoremstyle{definition}
\newtheorem{df}{Definition}
\newtheorem{remark}[df]{Remark}
\newcommand{\N}{\mathbb N}
\newcommand{\R}{\mathbb R}
\newcommand{\G}{\mathcal{G}}
\newcommand{\ha}{\symbol{94}}
\subjclass[2020]{28A12, 11B13}  
\keywords{Cantor sets, Cantorvals, algebraic difference of sets}
\begin{document}
\author{Piotr Nowakowski}
\address{Faculty of Mathematics and Computer Science, University of Lodz,
ul. Banacha 22, 90-238 \L \'{o}d\'{z}, Poland
\\
ORCID: 0000-0002-3655-4991}
\email{piotr.nowakowski@wmii.uni.lodz.pl}
\title[Conditions for the difference set of a central Cantor set II]{Conditions
for the difference set of a central Cantor set to be a Cantorval. Part II.}
\date{}
\begin{abstract}
Let $C(a)\subset [0,1]$ be the central Cantor set
generated by a sequence $a =(a_{n})\in \left( 0,1
\right) ^{\mathbb{N}}$. It is known that the difference set $%
C(a)-C(a)$ of $C(a)$ can has one of three possible forms: a
finite union of closed intervals, a Cantor set, or a Cantorval. In the previous paper there was proved a sufficient condition for the sequence $a$ which implies that $C(a)-C(a)$ is a Cantorval. In this paper we give different conditions for a sequence $a$ which guarantee
the same assertion. We also prove a corollary, which provides infinitely many new examples of Cantorvals. 
\end{abstract}

\maketitle

\section{Introduction}

We call a set $C \subset \R$ a \emph{Cantor set} if it is compact, perfect and nowhere dense. 

First, let us introduce some basic notation. For $A,B\subset \mathbb{R}$ we denote by $A\pm B$ the set $\left\{ a\pm
b:a\in A,\,b\in B\right\} $. The set $A- B$ is called the \emph{algebraic difference} of sets $A$ and $B$, and the set $A-A$ is called the \emph{difference set} of a set $A$. We will also write $a+A$ instead of $\{a\} + A$ for $a \in \R$. The Lebesgue measure of a measurable set $A$ is denoted by $%
\left\vert A\right\vert $. If $I \subset \R$ is an interval, then we will denote by $l(I)$, $r(I)$, $c(I)$ the left endpoint, the right endpoint and the centre of $I$, respectively. 

By $i^{(n)}$ we denote the sequence $(i,\dots ,i)$
with $n$ terms. We write $t|n$ to denote the sequence
consisting of the first $n$ terms of a given sequence $t$. If $t|n=s$,
then we say that $t$ is an \emph{extension} of $s$ and we write $s\prec t$. Concatenation of two sequences $t$ and $s$, is denoted by $t\symbol{94}%
s$. 

Given any set $C \subset \R$, every bounded component of 
the set $\R \setminus C$ is called a \emph{gap} of $C$. A component of $C$ is called proper if it is not a singleton.

A perfect set $E \subset \R$ is called a \emph{Cantorval} (or an M-Cantorval) if it has infinitely many gaps and both endpoints of any gap are accumulated by gaps and proper components of $E$ (see \cite{MO}). 

Cantor sets appear in many publications in various settings.
One of research areas concerning Cantor sets is related to examining how does the algebraic difference (or sum, or product) looks like (e.g. \cite{Ta}, \cite{T19}, \cite{K1}, \cite{AC}, \cite{GJXZ}). One of the most known theorems in this topic is the classical result of Steinhaus (\cite{St}), which states that the difference set of the classical Cantor set is $[-1,1]$. Later this result was generalized several times (see \cite{K}, \cite{AI}, \cite{FN}, \cite{Now}, \cite{ACI}). Also,
there are results on the possible form of algebraic difference of Cantor sets. It occurs that (for various types of Cantor sets) the algebraic difference of Cantor sets can have one of the following forms: it can be a finite union of closed intervals, a Cantor set or a Cantorval (see \cite{AI}, \cite{MO}, \cite{Now}). Actually, there are three main types of Cantorvals that can be obtained. They are called: L-Cantorvals, M-Cantorvals and R-Cantorvals (compare \cite{MO}). In the main part of this paper, when we write about a Cantorval, we always mean an M-Cantorval.

One of the most popular type of Cantor sets is a central Cantor set (e.g. \cite{BFN}, \cite{Zaj}, \cite{Tom}, \cite{AAV}). Its construction is presented in the next section.

In our paper we will be interested in properties of the difference
set $C(a)-C(a)$. This difference was examined by many authors (e.g. \cite{AI}, \cite{K}, \cite{FF}, \cite{S}, \cite{FN}). The research in this area were often based on the relationship between central Cantor sets
and the achievement sets, that is, the sets of all possible subsums of convergent series. 
%By a subsum of a convergent series we mean the sum of the series obtained
%from the given series by considering the indices belonging to a set $A\subset\N$.
Using this theory, Anisca and Ilie in
\cite{AI} showed that a finite sum of central Cantor sets can be either a
Cantor set or a finite union of closed intervals, or a Cantorval (compare
Theorem \ref{AI}). From another result of this paper it can be inferred that $C((a_n))-C((a_n))$ is a finite union of
closed intervals if and only if $a _{n}\leq \frac{1}{3}$ for all but finitely many terms (see Theorem \ref{tw1}). 
On the other hand, Sanammi showed that if $a _{n}> \frac{1}{3}$ for all $n \in \N$, then $C((a_n))-C((a_n))$ is a Cantor set. These two results generalize the earlier result obtained by Kraft, which concerned some specific central Cantor sets (namely, where the sequence $(a_n)$ was constant).

A main goal of our paper is to find some new
conditions which imply that the difference set $C\left( a \right)
-C\left( a\right) $ is a Cantorval.
In \cite{BBFS} and \cite{BGM} there are examples of central Cantor sets, for which the difference set is a Cantorval (however, given as some achievement sets). In \cite{FN} there was given some specific sufficient condition for the difference set of a central Cantor set to be a Cantorval. 
In this paper we will find another condition, which implies that the set $C(a)-C(a)$ is a Cantorval. This result is completely different than the one proved in \cite{FN}. However, methods used in both papers are similar.

\section{Preliminaries}
In the beginning, we present the construction of a central Cantor set.
Let $a = (a_n) \in (0,1)^{\N}$. Put $I := [0, 1]$. In the first step of the construction, we delete from $I$ the
open interval $P$ centered at $\frac{1}{2}$ of length $a_1$. By $I_0$ and $I_1$ we denote the left and the right components of $I\setminus P$, respectively. Their common length is denoted by $d_1$. Now, suppose that for some $n \in \N$ and every $t \in \{0,1\}^n$ we have constructed 
the interval $I_{t}$ of length $d_{n}$. Denote by $P_{t}$ the open interval of length $a_{n+1} d_{n}$, concentric with $I_{t}$. Then let $I_{t\ha 0}$ and $I_{t\ha 1}$
be the left and the right components of the set 
$I_{t} \setminus P_{t}$, respectively. 
By $d_{n+1}$ denote the common length
of these components. 

For each $n \in \mathbb{N}$ put
\begin{equation*}
\mathcal{I}_n:=\{I_{t_1, \dots ,t_n}\colon (t_1,\dots
,t_n)\in\{0,1\}^n\}\quad\mbox{ and } \quad C_n(a):= \bigcup \mathcal{I}_n.
\end{equation*}
Finally, set $C(a): = \bigcap_{n\in\mathbb{N}}C_n(a)$. Every such a set $C(a)$ is called a 
central Cantor set.
From the construction it easily follows that the length of each interval $%
I_{t}$, where $t\in\{0,1\}^n, n \in \N$ is equal to 
\begin{equation*}
d_{n}=\prod_{i=1}^n \frac{1-a_i}{2}.
\end{equation*}% 

We will now focus on examining the set $C(a)-C(a)$. We will start from the description of the set $C_n(a)-C_n(a)$. We have obviously
$$C_n(a)-C_n(a) = \bigcup_{I,I' \in \mathcal{I}_n} (I-I').$$
Now, let us introduce the family of intervals $J_s$ indexed by finite sequences $s=(s_1,\dots, s_n)$, 
$n\in\N$, with
$s_i\in\{0,1,2\}$, which depends on the sequence $a$. Very similar intervals were defined firstly in \cite{S}. They were also used in \cite{FN} and \cite{Now}.
Put $J:=I-I=\left[ -1,1\right] $. For  $n\in \mathbb{N}$ and $s\in
\left\{ 0,1,2\right\} ^{n}$ we define the interval $J_{s}$ by $%
J_{s}:=I_{t}-I_{p}$, where $p,t\in \left\{ 0,1\right\} ^{n}$ satisfy the equation $%
s_{i}=t_{i}-p_{i}+1$ for $i=1,\ldots ,n$. This definition does not depend on the choice of $p$ and $t$, because the following equalities hold (see \cite{FN})
\begin{eqnarray*}
J_{s\symbol{94}0} &=&I_{t\ha 0} - I_{p \ha 1} = \left[ l(J_{s}),l(J_{s})+2d_{n+1}\right] , \\
J_{s\symbol{94}1} &=&I_{t\ha 0} - I_{p \ha 0} =I_{t\ha 1} - I_{p \ha 1} =[c(J_{s})-d_{n+1},c(J_{s})+d_{n+1}], \\
J_{s\symbol{94}2}&=&I_{t\ha 1} - I_{p \ha 0} =[r(J_{s})-2d_{n+1},r(J_{s})].
\end{eqnarray*}%
Thus, we have
$$C_n(a)-C_n(a) = \bigcup_{s\in\{0,1,2\}^n}J_s.$$

Let $n\in \mathbb{N}$, $s\in \left\{ 0,1,2\right\} ^{n}$ and $a \in
\left( 0,1\right) ^{\mathbb{N}}$. If $a_{n+1}>\frac{1}{3}$,
then the set $J_{s}\setminus \left( J_{s\symbol{94}0}\cup J_{s\symbol{94}%
1}\cup J_{s\symbol{94}2}\right) $ is a union of two open intervals. We
denote them by $G_{s}^{0}$ and $G_{s}^{1}$, and call the \emph{left} and the 
\emph{right gap} in $J_{s}$. If $a_{n+1}\leq \frac{1}{3}$, then $J_{s%
\symbol{94}0}\cap J_{s\symbol{94}1}\neq \emptyset $ and $J_{s\symbol{94}%
1}\cap J_{s\symbol{94}2}\neq \emptyset $. We denote these intervals by $%
Z_{s}^{0}$ and $Z_{s}^{1}$, and we call them the \emph{left} and the \emph{%
right overlap} in $J_{s}$. We also assume that $d_{0}:=\left\vert
I\right\vert =1$ and that $\left\{ 0,1,2\right\} ^{0}$ is equal to the
empty sequence $s=\emptyset $. 

The following useful proposition was proved in \cite{FN}. There a sequence $ \lambda = (\lambda_n) \in (0, \frac{1}{2})^\N$ was used instead of $a$, but there is an easy connection between these two sequences. Namely, $\lambda_n = \frac{1-a_n}{2}$ for any $n \in \N$ and $\lambda_n < \frac{1}{3}$ is equivalent to $a_n > \frac{1}{3}.$
\begin{proposition}
\label{lem}Let $n,k\in \mathbb{N}$, $s,u\in \left\{ 0,1,2\right\} ^{n}$, and $%
a =(a_i) \in \left( 0,1\right) ^{\mathbb{N}}$. The following
properties hold.

\begin{enumerate}
\item \label{1-1}$\left\vert J_{s}\right\vert =2d_{n}.$

\item \label{1-2}$l\left( J_{s\symbol{94}0^{\left( k\right) }}\right)
=l\left( J_{s}\right) ,c\left( J_{s\symbol{94}1^{\left( k\right) }}\right)
=c\left( J_{s}\right) $, and $r\left( J_{s\symbol{94}2^{\left( k\right)
}}\right) =r\left( J_{s}\right) .$

\item \label{1-2b}If $n>k$, then $d_{n-1}-d_{n}<d_{k-1}-d_{k}$.

\item \label{1-2a}$l\left( J_{s}\right) -l\left( J_{u}\right) =c\left(
J_{s}\right) -c\left( J_{u}\right) =r\left( J_{s}\right) -r\left(
J_{u}\right) =\sum_{r=1}^{n}\left( s_{r}-u_{r}\right) \cdot \left(
d_{r-1}-d_{r}\right) .$

\item \label{1-3}If $a_{n+1}>\frac{1}{3}$, then $G_{s}^{0}=\left(
r\left( J_{s\symbol{94}0}\right) ,l\left( J_{s\symbol{94}1}\right) \right) $%
, $G_{s}^{1}=\left( r\left( J_{s\symbol{94}1}\right) ,l\left( J_{s\symbol{94}%
2}\right) \right) $, and $\left\vert G_{s}^{0}\right\vert =\left\vert
G_{s}^{1}\right\vert =d_{n}-3d_{n+1}.$

\item \label{1-4}If $a_{n+1}\leq \frac{1}{3}$, then $Z_{s}^{0}=\left[
l\left( J_{s\symbol{94}1}\right) ,r\left( J_{s\symbol{94}0}\right) \right] $%
, $Z_{s}^{1}=\left[ l\left( J_{s\symbol{94}2}\right) ,r\left( J_{s\symbol{94}%
1}\right) \right] $, and $\left\vert Z_{s}^{0}\right\vert =\left\vert
Z_{s}^{1}\right\vert =3d_{n+1}-d_{n}.$

%\item \label{1-5}$C_{n}\left( a\right) -C_{n}\left( a \right)
%=\bigcup_{t\in \left\{ 0,1,2\right\} ^{n}}J_{t}.$

\item \label{1-6}If $a_{n+1}\leq \frac{1}{3},\ldots ,
a_{n+k}\leq \frac{1}{3}$, then $J_{s}=\bigcup_{t\in \left\{ 0,1,2\right\}
^{n+k},s\prec t}J_{t}$ and $C_{n}\left( a \right) -C_{n}\left( a
\right) =C_{n+k}\left( a \right) -C_{n+k}\left( a \right) $.

\item \label{1-7}$C(a )-C(a )=\bigcap_{n\in \mathbb{N}%
}(C_{n}(a)-C_{n}(a))$.

\item \label{1-8}$C(a )+C(a )=\left( C(a )-C(a
)\right) +1$.
\end{enumerate}
\end{proposition}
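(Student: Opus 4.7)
The plan is to verify the nine items largely mechanically, using only the recursive formulas for $J_{s\ha 0},J_{s\ha 1},J_{s\ha 2}$ stated just above the proposition, together with the explicit expression $d_{n+1} = d_n \cdot \frac{1-a_{n+1}}{2}$. Item (\ref{1-1}) is immediate from $J_s = I_t - I_p$ and $|I_t| = |I_p| = d_n$. Item (\ref{1-2}) follows by iterating the recursion: appending a $0$, $1$, or $2$ preserves the left endpoint, the centre, or the right endpoint of $J_s$, respectively. Items (\ref{1-3}) and (\ref{1-4}) reduce to computing $l(J_{s\ha 1}) - r(J_{s\ha 0}) = (l(J_s) + d_n - d_{n+1}) - (l(J_s) + 2d_{n+1}) = d_n - 3d_{n+1}$ and its symmetric counterpart on the right; the sign is controlled by the equivalence $d_n \gtrless 3d_{n+1} \Leftrightarrow a_{n+1} \gtrless \frac{1}{3}$, which is a one-line check.

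For (\ref{1-2b}) I would rewrite $d_{r-1}-d_r = d_{r-1}\cdot\frac{1+a_r}{2} > \frac{d_{r-1}}{2} > d_r$ and chain: when $n>k$, $d_{n-1}-d_n < d_{n-1} \le d_k < d_{k-1}-d_k$. For (\ref{1-2a}) I would prove the identity for left endpoints by induction on $n$. The recursions give $l(J_{s'\ha j}) = l(J_{s'}) + j(d_n-d_{n+1})$ for $j\in\{0,1,2\}$, so subtracting the analogue for $u'\ha j'$ and invoking the inductive hypothesis produces the claimed telescoping sum. The identities for $c$ and $r$ follow at once, because $c(J_s) = l(J_s)+d_n$ and $r(J_s) = l(J_s)+2d_n$ depend only on the length $n$ of $s$, so the extra $d_n$ terms cancel when one subtracts the analogous expressions for two sequences of equal length.

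Item (\ref{1-6}) I would prove by induction on $k$. The base case $k=1$ is exactly (\ref{1-4}): consecutive subintervals overlap, hence $J_{s\ha 0}\cup J_{s\ha 1}\cup J_{s\ha 2} = J_s$; taking the union over $s\in\{0,1,2\}^n$ yields $C_n(a)-C_n(a) = C_{n+1}(a)-C_{n+1}(a)$, and the inductive step iterates. For (\ref{1-7}), the inclusion $\subseteq$ is immediate from $C(a)\subseteq C_n(a)$; for $\supseteq$, given $x$ in every $C_n(a)-C_n(a)$ I would pick $c_n,c_n'\in C_n(a)$ with $c_n - c_n' = x$ and extract, via compactness of $[0,1]^2$, a convergent subsequence whose limit $(c,c')$ lies in $\bigcap_n C_n(a) = C(a)$ by nestedness. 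Finally, (\ref{1-8}) follows from the symmetry $C(a) = 1 - C(a)$, which is evident from the construction, via the substitution $y\mapsto 1-y$. I do not foresee any substantive obstacle: the proposition is essentially a bookkeeping exercise built on the three recursive formulas, and the only step beyond direct substitution is the routine compactness argument in (\ref{1-7}).
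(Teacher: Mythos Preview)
The paper does not supply its own proof of this proposition: it is quoted verbatim from \cite{FN} and merely cited (``The following useful proposition was proved in \cite{FN}''). So there is nothing in the present paper to compare against.

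Your argument is correct and self-contained. Each item is handled by the natural method: (\ref{1-1})--(\ref{1-2}) and (\ref{1-3})--(\ref{1-4}) are direct consequences of the three recursive formulas for $J_{s\ha j}$ together with the identity $d_n-3d_{n+1}=d_n\cdot\frac{3a_{n+1}-1}{2}$; your chain $d_{n-1}-d_n<d_{n-1}\le d_k<d_{k-1}-d_k$ for (\ref{1-2b}) is clean and uses exactly the inequality $d_{r-1}-d_r>d_r$; the induction for (\ref{1-2a}) via $l(J_{s'\ha j})=l(J_{s'})+j(d_n-d_{n+1})$ is the standard route; (\ref{1-6}) follows by iterating the overlap case; the compactness/diagonal argument for (\ref{1-7}) is the expected one (and you correctly use nestedness and closedness of the $C_n(a)$ to place the limit in $C(a)$); and (\ref{1-8}) is immediate from the central symmetry $C(a)=1-C(a)$. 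There is no gap.
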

We will also need the following results.
\begin{theorem} \cite{AI}
\label{AI} For any sequence $a \in \left( 0,1\right) ^{%
\mathbb{N}}$, the set $C\left(a \right) -C\left(a \right) $
has one of the following fashions:

\begin{enumerate}
\item a finite union of closed intervals;

\item a Cantor set;

\item a Cantorval.
\end{enumerate}
\end{theorem}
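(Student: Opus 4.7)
My plan is to use the representation $C(a) - C(a) = \bigcap_{n \geq 1}\bigl(C_n(a) - C_n(a)\bigr)$ from Proposition \ref{lem}(\ref{1-7}) and split into cases according to whether $a_n > \tfrac{1}{3}$ only finitely often. First I would record the easy observations that $C(a) - C(a)$ is compact (as a continuous image of $C(a) \times C(a)$) and perfect: any $x - y \in C(a) - C(a)$ is a limit of $x_k - y$ with $x_k \in C(a) \setminus \{x\}$, since $C(a)$ has no isolated points.

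If $a_n \leq \tfrac{1}{3}$ for every $n > n_0$, then iterating Proposition \ref{lem}(\ref{1-6}) gives $C_n(a) - C_n(a) = C_{n_0}(a) - C_{n_0}(a)$ for all $n \geq n_0$, so the descending intersection stabilizes and $C(a) - C(a) = C_{n_0}(a) - C_{n_0}(a)$, a finite union of closed intervals; this is case~(1). Otherwise $a_n > \tfrac{1}{3}$ for infinitely many $n$, and by Proposition \ref{lem}(\ref{1-3}) each such index produces a pair of open gaps $G_s^0, G_s^1$ lying in the complement of every $C_m(a) - C_m(a)$ with $m$ larger, hence in the complement of $C(a) - C(a)$. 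I would then split on whether $\mathrm{int}\bigl(C(a) - C(a)\bigr) = \emptyset$.

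In the empty-interior subcase, $C(a) - C(a)$ is compact, perfect and nowhere dense, so a Cantor set (case~(2)). In the nonempty-interior subcase I would need to verify the Cantorval definition, which is the main obstacle: that the set has infinitely many gaps and that both endpoints of every gap are accumulated by gaps and by proper components. The strategy here is to exploit the self-similar structure encoded in Proposition \ref{lem}(\ref{1-2}) and (\ref{1-2a}): each gap endpoint coincides with an endpoint of some $J_s$, and by (\ref{1-2}) also with the corresponding endpoint of $J_{s\ha 0^{(k)}}$ or $J_{s\ha 2^{(k)}}$ for every $k$. Inside these shrinking intervals the construction repeats with a shifted tail of $(a_n)$; since this tail still has infinitely many terms $> \tfrac{1}{3}$, new gaps appear arbitrarily close to the endpoint, and via the affine maps relating deeper-level $J_s$ to $J_\emptyset = [-1,1]$, the assumed interior of $C(a) - C(a)$ translates into proper components appearing arbitrarily close to the endpoint. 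Together these give the Cantorval property, completing case~(3).
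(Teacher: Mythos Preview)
The paper does not prove Theorem~\ref{AI}; it is quoted from \cite{AI} as a known background result, so there is no in-paper argument to compare against. Regarding your sketch: the case split and the first two cases are correct (the finite-union case is Theorem~\ref{tw1}(\ref{tw1-2}), and compact, perfect, nowhere dense gives a Cantor set). The Cantorval case, however, has genuine gaps.

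First, your claim that the intervals $G_s^0,G_s^1$ from Proposition~\ref{lem}(\ref{1-3}) lie in the complement of every later $C_m(a)-C_m(a)$ is false: these are only the components of $J_s\setminus(J_{s\ha0}\cup J_{s\ha1}\cup J_{s\ha2})$, and intervals $J_{s'}$ for other $s'\in\{0,1,2\}^{n+1}$ may overlap and cover them --- the paper's Theorem~\ref{main} is precisely about situations in which many such $G_s^i$ \emph{are} covered. Relatedly, the affine map taking $J_s$ onto $[-1,1]$ carries the relevant piece of the difference set to $C(a')-C(a')$ for the \emph{tail} $a'$, not to $C(a)-C(a)$ itself; so ``the assumed interior of $C(a)-C(a)$ translates'' is not literally correct and needs the extra (easy) observation that $C(a)-C(a)$ is a finite union of affine copies of $C(a')-C(a')$, hence has empty interior whenever the latter does.

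Second, and more seriously, even granting that intervals of $C(a)-C(a)$ appear arbitrarily close to a gap endpoint $b$, you have not shown that \emph{gaps} of $C(a)-C(a)$ accumulate there. Your plan is that the scaled copy of $C(a')-C(a')$ sitting inside $J_{s\ha0^{(k)}}$ has gaps because the tail still has infinitely many terms exceeding $\tfrac13$; but those are gaps of the scaled copy, and in $C(a)-C(a)$ they may be filled by overlapping $J_{s'}$ coming from other branches (exactly as in the previous paragraph). Nothing in the sketch rules out $[b,b+\varepsilon]\subset C(a)-C(a)$ for some $\varepsilon>0$, and if that occurred the Cantorval definition would fail at $b$. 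Excluding this possibility is the substantive content of the trichotomy, and your proposal does not supply an argument for it.
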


\begin{theorem} [see \cite{AI}, \cite{Now}, \cite{FN}]
\label{tw1} Let $a =\left( a _{n}\right) \in \left( 0,1%
\right) ^{\mathbb{N}}$. The following statements hold.

\begin{enumerate}
\item \label{tw1-1}$C\left( a \right) -C\left( a \right) =\left[
-1,1\right] $ if and only if $a _{n}\leq \frac{1}{3}$ for all $n\in 
\mathbb{N}$.

\item \label{tw1-2}$C\left( a \right) -C\left( a \right) $ is a
finite union of intervals if and only if the set $\left\{ n\in \mathbb{N}%
:a_{n}>\frac{1}{3}\right\} $ is finite.
\end{enumerate}
\end{theorem}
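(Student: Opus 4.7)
The plan is to reduce both parts to Proposition \ref{lem}, exploiting its item (\ref{1-6}) on the overlap side ($a_i \leq \frac{1}{3}$) and item (\ref{1-3}) on the gap side ($a_i > \frac{1}{3}$), together with (\ref{1-7}) to pass to the intersection.

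For part (\ref{tw1-1}), the ($\Leftarrow$) direction is immediate: applying Proposition \ref{lem}(\ref{1-6}) with $s=\emptyset$ and $n=0$ gives $[-1,1] = J_\emptyset = \bigcup_{t\in\{0,1,2\}^k} J_t$ for every $k$, so $C_k(a)-C_k(a) = [-1,1]$, and (\ref{1-7}) finishes. For ($\Rightarrow$), I take the minimal $n_0$ with $a_{n_0} > \frac{1}{3}$ and inspect the left-most gap opened at that step. Setting $s = 0^{(n_0-1)}$, item (\ref{1-2}) gives $l(J_s) = l(J_\emptyset) = -1$, and item (\ref{1-3}) produces a gap $G_s^0 \subset J_s$ of positive length $d_{n_0-1}-3d_{n_0}$. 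Using (\ref{1-2a}) to compute $l(J_u) - l(J_s)$ for arbitrary $u \in \{0,1,2\}^{n_0}$, together with (\ref{1-2b}) for the required strict inequality, I verify that every $J_u$ at level $n_0$ with $s \not\prec u$ starts strictly to the right of $G_s^0$; hence $G_s^0$ is disjoint from $C_{n_0}(a)-C_{n_0}(a)$, and therefore from $C(a)-C(a)$.

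For part (\ref{tw1-2}), the ($\Leftarrow$) direction again follows directly from Proposition \ref{lem}(\ref{1-6}) and (\ref{1-7}): if $a_n \leq \frac{1}{3}$ for every $n > n_0$, applying (\ref{1-6}) to each $s \in \{0,1,2\}^{n_0}$ yields $C_{n_0+k}(a)-C_{n_0+k}(a) = C_{n_0}(a)-C_{n_0}(a)$ for all $k$, so $C(a)-C(a) = C_{n_0}(a)-C_{n_0}(a)$ is a finite union of the intervals $J_s$. For ($\Rightarrow$) I argue by contrapositive; by Theorem \ref{AI} it suffices to exhibit infinitely many gaps in $C(a)-C(a)$. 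Each level $n_k$ with $a_{n_k}>\frac{1}{3}$ strictly increases the number of gaps of $C_n(a)-C_n(a)$, by item (\ref{1-3}). To transfer this to $C(a)-C(a)$ I use the observation that $J_s \cap (C(a)-C(a)) \neq \emptyset$ for every finite $s$: pick $x \in I_t \cap C(a)$ and $y \in I_p \cap C(a)$ with $J_s = I_t - I_p$, giving $x-y \in J_s \cap (C(a)-C(a))$. Thus every ``wall'' separating two gaps of $C_n(a)-C_n(a)$ meets $C(a)-C(a)$, so distinct level-$n$ gaps lie in distinct gaps of $C(a)-C(a)$, forcing the latter count to infinity.

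The step I expect to be most delicate is the coordinate check in ($\Rightarrow$) of (\ref{tw1-1}): one has to rule out that the freshly opened gap $G_s^0$ is reoccupied by some $J_u$ extending a different length-$(n_0-1)$ sequence whose parent interval overlaps $J_s$ as a result of an earlier $a_i \leq \frac{1}{3}$ step. After extending $s$ to length $n_0$ by a trailing $0$, identity (\ref{1-2a}) expresses $l(J_u) - l(J_s)$ as a nonnegative sum $\sum_{r}(u_r - s_r)(d_{r-1}-d_r)$, and the strict decrease in (\ref{1-2b}) guarantees that the first nonzero summand (whose index is at most $n_0-1$) already exceeds $d_{n_0-1}-d_{n_0}$, which pushes $l(J_u)$ past the right endpoint of $G_s^0$. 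The ``wall meets $C(a)-C(a)$'' observation used in part (\ref{tw1-2}) handles the corresponding worry there, namely that new gaps opened at different bad levels might be absorbed into a single gap of $C(a)-C(a)$.
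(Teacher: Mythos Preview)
The paper does not prove Theorem~\ref{tw1} at all; it is quoted from \cite{AI}, \cite{Now}, \cite{FN} as a known result, so there is no ``paper's own proof'' to compare against. Your argument therefore has to be judged on its own.

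Your proof is essentially correct. Both $(\Leftarrow)$ directions are clean applications of Proposition~\ref{lem}(\ref{1-6}) and (\ref{1-7}). The coordinate check in part~(\ref{tw1-1})$(\Rightarrow)$ is exactly right: for $u\in\{0,1,2\}^{n_0}$ with $u|(n_0-1)\neq 0^{(n_0-1)}$, the identity in (\ref{1-2a}) gives $l(J_u)+1=\sum_r u_r(d_{r-1}-d_r)\geq d_{r_0-1}-d_{r_0}>d_{n_0-1}-d_{n_0}=r(G^0_s)+1$ by (\ref{1-2b}), so $J_u$ lies to the right of $G^0_s$; the three intervals with $s\prec u$ miss $G^0_s$ by definition. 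The ``wall meets $C(a)-C(a)$'' observation and its use in part~(\ref{tw1-2})$(\Rightarrow)$ are also correct (and the appeal to Theorem~\ref{AI} is harmless but unnecessary: a finite union of closed intervals has finitely many gaps).

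The one genuine soft spot is the sentence ``Each level $n_k$ with $a_{n_k}>\frac{1}{3}$ strictly increases the number of gaps of $C_n(a)-C_n(a)$, by item~(\ref{1-3}).'' Item~(\ref{1-3}) alone does \emph{not} give this: it only produces gaps $G^0_s,G^1_s$ inside a single $J_s$, and such a gap can be reoccupied by an overlapping $J_{s'}$ --- precisely the phenomenon you worried about in part~(\ref{tw1-1}). What actually justifies the claim is your own leftmost-gap calculation, which goes through verbatim at every bad index $n_k$ (your computation never used that $n_0$ was the \emph{first} bad index). Once you observe this, the gap $G^0_{0^{(n_k-1)}}$ is a genuine new gap of $C_{n_k}(a)-C_{n_k}(a)$ lying inside $J_{0^{(n_k-1)}}\subset C_{n_k-1}(a)-C_{n_k-1}(a)$, and since every wall at level $n_k-1$ still contains some $J_{s\symbol{94}0}$ at level $n_k$, no old gaps merge. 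You should invoke that argument explicitly here rather than cite~(\ref{1-3}).
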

Fix a sequence $a =(a_n) \in (0,1)^{\N}$ such that the set $\{n \in \N\colon a_n > \frac{1}{3}\}$ is infinite and assume that there is $k_0 \in \N \cup \{0\}$ such that $a_{k_0+1} < \frac{1}{3}$. Let $(k_n)$ be an increasing sequence of all indices greater than $k_0$ such that $a_{k_n} > \frac{1}{3}$. Above notation will be used for the rest of this section. 

For $n\in \N$ and $s \in \{0,1,2\}^{k_n-1}$ the sets $G^0_s$ and $G^1_s$ are gaps in $J_s$. We call them gaps of rank $n$. We define also $N(0,s)= \max \{j \colon s_j > 0\}$ and $N(1,s)= \max \{j \colon s_j < 2\}$, where $\max \emptyset :=0$. Therefore, for $N=N(0,s)$ we have $s_N > 0$ and $s_{N+1} = \dots = s_{k_n-1} = 0$, and for $N=N(1,s)$ we have
$s_N < 2$ and $s_{N+1} = \dots = s_{k_n-1} = 2$.
%%We have $N(i,s) \leq k_m-1$ and $s=(s|N(0,s))\,\hat{}\,0^{(k_m-1-N(0,s))} = (s|N(2,s))\,\hat{}\,2^{(k_m-1-N(2,s))} = (s|N(i,s))\,\hat{}\,(2i)^{(k_m-1-N(i,s))}$. 
%In the sequel we write $N$ instead of $N(i,s)$, if $i, s$ are established. 

From Proposition \ref{lem} (\ref{1-7}) it follows that gaps in the sets $C_n(a)-C_n(a)$ appear also in $C(a)-C(a)$. We will now define some particular gaps in intervals $J_t$ which will be important in our further considerations.

Take $k \in \N$ and $t \in \{0,1,2\}^k$. Let $n \in \N$ be such that $k_n > k$. If $s \in \{0,1,2\}^{k_n-1}$, $t \prec s$ and $i \in \{0,1\}$, then we say that a gap $G_s^i$ is a gap \emph{from} $J_t$ (or comes from $J_t$).
The gap $G^0_{t \ha 0^{(k_n-k-1})}$ is called the leftmost gap in $J_t$ of rank $n$ and is denoted by $\bar{G}^0_t(n)$. It is a gap which comes from $J_t$ and is nearest the left endpoint of $J_t$. Analogously, the gap $G^1_{t \ha 2^{(k_n-k-1})}$ is called the rightmost gap in $J_t$ of rank $n$ and is denoted by $\bar{G}^1_t(n)$. It is a gap which comes from $J_t$ and is nearest the right endpoint of $J_t$.

Take $m \in \N$ and $s \in \{0,1,2\}^{k_m-1}$. 
%such that $N(i,s) > k_{m-1}$ 
For $n\geq m$ we define inductively the families of gaps of rank $n$ $\G ^0 _s(n)$ and $\G^1_s(n)$. For $i \in \{0,1\}$ put $\G ^i_s(m) := \{G^i_s\}$. 

Assume that for some $n \geq m$ we have defined the families $\G ^i_s(l)$ for all $ l\in \{m, m+1, \dots, n\}$ and $i \in \{0,1\}$. We define
$$\G ^0_s(n+1):= \{\bar{G}^0_{s}(n+1)\} \cup \bigcup_{l \leq n} \left( \bigcup_{t:G_t^0 \in \G^0_s(l)} \{\bar{G}^1_{t\,\hat{}\,0}(n+1), \bar{G}^0_{t\,\hat{}\,1}(n+1)\} \cup \bigcup_{t:G_t^1 \in \G^0_s(l)} \{\bar{G}^1_{t\,\hat{}\,1}(n+1), \bar{G}^0_{t\,\hat{}\,2}(n+1)\}\right) $$
$$=
\{\bar{G}^0_{s}(n+1)\} \cup \bigcup_{l \leq n}\, \bigcup_{i \in \{0,1\}} \,\bigcup_{t:G_t^i \in \G^0_s(l)} \{\bar{G}^1_{t\,\hat{}\,i}(n+1), \bar{G}^0_{t\,\hat{}\,(i+1)}(n+1)\} 
$$
and
$$\G ^1_s(n+1):= \{\bar{G}^1_{s}(n+1)\} \cup \bigcup_{l \leq n} \left( \bigcup_{t:G_t^0 \in \G^1_s(l)} \{\bar{G}^1_{t\,\hat{}\,0}(n+1), \bar{G}^0_{t\,\hat{}\,1}(n+1)\} \cup \bigcup_{t:G_t^1 \in \G^1_s(l)} \{\bar{G}^1_{t\,\hat{}\,1}(n+1), \bar{G}^0_{t\,\hat{}\,2}(n+1)\}\right)$$
$$=
\{\bar{G}^1_{s}(n+1)\} \cup \bigcup_{l \leq n}\, \bigcup_{i \in \{0,1\}} \,\bigcup_{t:G_t^i \in \G^1_s(l)} \{\bar{G}^1_{t\,\hat{}\,i}(n+1), \bar{G}^0_{t\,\hat{}\,(i+1)}(n+1)\}.
$$
In particular,
$$\G^0_s(m+1) = \{ \bar{G}^0_{s}(m+1),\bar{G}^0_{s\ha 1}(m+1), \bar{G}^1_{s\ha 0}(m+1)\}$$ 
and
$$\G^1_s(m+1) = \{ \bar{G}^1_{s}(m+1),\bar{G}^0_{s\ha 2}(m+1), \bar{G}^1_{s\ha 1}(m+1)\}.$$

%Thus, the family $\G^0_s(m+1)$ consists of the leftmost gap in $J_t$ of rank $m+1$ and the rightmost gap in składa się ze skrajnej lewej luki w $J_t$ rzędu $m+1$ oraz odpowiednio skrajnej prawej i lewej luki w przedziałach $J_{s\ha 0}$ i $J_{s \ha 1}$, które są przedziałami przylegającymi do luki $G^0_s$ odpowiednio z lewej i prawej strony. Innymi słowy są to luki z $J_s$ w $C_{k_{m+1}}(a) - C_{k_{m+1}}(a)$, które leżą najbliżej lewego krańca $J_s$ oraz obu krańców luki $G^0_s$.
Geometrically, the family $\G^0_s(n)$ consist of the leftmost gap of rank $n$ in $J_s$ and those gaps from $J_s$ which are nearest both endpoints of all gaps  from the families $\G^0_s(l)$ for $l < n$. The families $\G^1_s(n)$ can be described analogously.

We have defined the families $\G ^0 _s(n)$ and $\G^1_s(n)$ for $n \geq m$, where $s \in \{0,1,2\}^{k_m-1}$. Now, we define unions of these gaps, putting $$\G ^i_s := \bigcup_{n \geq m} \G ^i_s (n)$$ for $i \in \{0,1\}.$ 

Using the definitions above, we define a particular family of gaps in any interval of the form $J_t$. Namely, let $k\geq k_{0}$ and $t\in \left\{ 0,1,2\right\} ^{k}$. Then there is $m\in \mathbb{N}$ such that $k_{m-1}\leq k<k_{m}$. Set
$$\G_t:=\G^0_{t\ha 0^{(k_m-k-1)}} \cup \G^1_{t\ha 2^{(k_m-k-1)}} = \bigcup_{n \geq m} \left(\G^0_{t\ha 0^{(k_m-k-1)}}(n) \cup \G^1_{t\ha 2^{(k_m-k-1)}}(n) \right).$$
It is worth mentioning that the family $\G_t$ defined above is exactly the same as the family $\G_t$ defined in \cite{FN}.

The next theorem proved in \cite{FN} shows, under some not restrictive assumptions, that a set of the form $J_t \setminus \bigcup \G_t$ has nonempty interior. We will use this fact later in the proof of Theorem \ref{main}. We will show, under some stronger assumptions, that $J_t \setminus \bigcup \G_t \subset C(a)-C(a)$, and hence $C(a)-C(a)$ has nonempty interior.
\begin{theorem} \cite{FN}
\label{tw4}Assume that $a =\left( a _{j}\right) _{j\in \mathbb{N}%
}\in \left( 0,1\right) ^{\mathbb{N}}$ is a sequence such that: $%
a _{n}>\frac{1}{3}$\ for infinitely many terms, $a _{n}\leq
\frac{1}{3}$\ for infinitely many terms, and $k_{0}\in \mathbb{N%
}\cup \left\{ 0\right\} $ is such that $a_{k_{0}+1}<\frac{1}{3}$. Let $%
k\geq k_{0}$, $t\in \left\{ 0,1,2\right\} ^{k}$ and $m\in \mathbb{N}$ be
such that $k_{m-1}\leq k<k_{m}$, where the sequence $\left( k_n \right)$ consists of all indices greater than $k_0$, for which $a_{k_n} > \frac{1}{3}$. The following statements hold.

\begin{enumerate}
\item \label{tw4-1}The set $J_{t}\setminus \bigcup \G_t $ has
nonempty interior.

\item \label{tw4-2}If $k=k_{0}=0$ and $t=\emptyset $, then $C\left( a
\right) -C\left( a \right) \subset \left[ -1,1\right] \setminus
\G_t $ and 
\begin{equation*}
\left\vert \bigcup \G_t \right\vert =\sum_{n=1}^{\infty }2\cdot
3^{n-1}\left( d_{k_{n}-1}-3d_{k_{n}}\right) .
\end{equation*}
\end{enumerate}
\end{theorem}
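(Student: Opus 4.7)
For the inclusion in Part (\ref{tw4-2}), every gap in $\G_t$ is of the form $G^i_u$ with $u\in\{0,1,2\}^{k_n-1}$ for some $n\geq m$. Since $a_{k_n} > \frac{1}{3}$, Proposition \ref{lem}(\ref{1-3}) shows that $G^i_u$ is disjoint from $J_{u\ha 0}\cup J_{u\ha 1}\cup J_{u\ha 2}$, hence from $C_{k_n}(a)-C_{k_n}(a)$, and by Proposition \ref{lem}(\ref{1-7}) disjoint from $C(a)-C(a)$; taking the union over all gaps gives the claimed inclusion. For the measure formula, any two distinct gaps $G^i_u,\,G^j_v$ of the construction are geometrically disjoint (they correspond to different nodes of the ternary tree and thus live in disjoint basic subintervals), so $|\bigcup\G_\emptyset|$ is the sum of lengths of its constituent gaps. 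Every gap of rank $n$ has length $d_{k_n-1}-3d_{k_n}$ by Proposition \ref{lem}(\ref{1-3}), and a direct induction using $|\G^0_{s_0}(1)|=1$ together with the recursion $|\G^0_{s_0}(n+1)|=1+2\sum_{l=1}^{n}|\G^0_{s_0}(l)|$ gives $|\G^0_{s_0}(n)|=3^{n-1}$; an identical count applies to $\G^1_{s_1}$. Summing then produces the claimed series.

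For Part (\ref{tw4-1}) I split on whether $k<k_m-1$ or $k=k_m-1$. Suppose first $k<k_m-1$, and set $U:=J_{t\ha 1^{(k_m-k)}}$, a basic interval of length $2d_{k_m}>0$ centered at $c(J_t)$, so $U\subset J_t$ has nonempty interior. By induction on the rank $n\geq m$, every gap in $\G^0_{s_0}$ is contained in $J_{s_0\ha 0}\cup\overline{G^0_{s_0}}\cup J_{s_0\ha 1}$: the leftmost-gap $\bar{G}^0_{s_0}(n+1)$ lives in $J_{s_0\ha 0}$, and for every $G^i_u\in\G^0_{s_0}(l)$ already placed in this region, both offspring $\bar{G}^1_{u\ha i}(n+1)$ and $\bar{G}^0_{u\ha(i+1)}(n+1)$ are contained in $J_{u\ha i}\cup J_{u\ha(i+1)}$, which by the inductive hypothesis remains inside the prescribed region. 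Since $s_0=t\ha 0^{(k_m-k-1)}$ shares its left endpoint with $J_t$, this yields $\bigcup\G^0_{s_0}\subset[l(J_t),\,l(J_t)+d_{k_m-1}+d_{k_m}]$. A short calculation from $a_{k_m}>\frac{1}{3}$ (so $d_{k_m}<d_{k_m-1}/3$) together with the product formula $d_{k_m-1}/d_k=\prod_{i=k+1}^{k_m-1}(1-a_i)/2$ (each factor strictly below $\tfrac{1}{2}$, with at least one factor present in this case) yields $d_{k_m-1}+2d_{k_m}<d_k$, equivalently $l(J_t)+d_{k_m-1}+d_{k_m}<l(U)=l(J_t)+d_k-d_{k_m}$. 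Symmetrically $\bigcup\G^1_{s_1}$ lies to the right of $r(U)$, so $U\cap\bigcup\G_t=\emptyset$.

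In the remaining case $k=k_m-1$ we have $s_0=s_1=t$ and $\G_t=\G^0_t\cup\G^1_t$. Here I would establish, by simultaneous induction on the rank $n\geq m+1$, the identities
\[
\G^0_t \cap J_{t\ha 1}=\G^0_{t\ha 1\ha 0^{(k_{m+1}-k_m-1)}},\qquad \G^1_t \cap J_{t\ha 1}=\G^1_{t\ha 1\ha 2^{(k_{m+1}-k_m-1)}},
\]
whose pivotal computations are the absorption $\bar{G}^0_{t\ha 1}(n)=\bar{G}^0_{t\ha 1\ha 0^{(k_{m+1}-k_m-1)}}(n)$ and the matching of offspring operators through the prefix change. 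These identities yield $\bigcup\G_t\cap J_{t\ha 1}=\bigcup\G_{t\ha 1}$ and reduce the Part (1) claim for $(t,k,m)$ to the analogous claim for $(t\ha 1,k+1,m+1)$. Iterating this reduction, the hypothesis that $a_n\leq\frac{1}{3}$ for infinitely many $n$ guarantees the existence of some $N\geq 1$ with $k_{m+N}>k_m+N$, exactly the condition that makes Case 1 applicable at the $N$th iteration; the sub-interval produced then sits inside $J_{t\ha 1^{(N)}}\subset J_t$ and is disjoint from $\bigcup\G_t$. The main obstacle is this family identity: although geometrically transparent from the picture, a rigorous verification requires carefully tracking both the leftmost-gap and offspring operators through the prefix transformation at every rank, using precisely the recursive scheme defining $\G^0_s$.
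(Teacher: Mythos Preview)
The paper does not supply a proof of this theorem; it is quoted from \cite{FN}. So there is no in-paper argument to compare against, and I assess your proposal on its own merits.

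Your Case~1 of Part~(\ref{tw4-1}) is essentially correct, though the inductive invariant should be stated more sharply: every $G^i_u\in\G^0_{s_0}$ either equals $G^0_{s_0}$ or has $s_0\ha 0\prec u$ or $s_0\ha 1\prec u$. This follows immediately from the recursion (offspring indices extend the parent's, and $\bar G^0_{s_0}(n)$ has index $s_0\ha 0^{(\cdot)}$), and it yields the containment $\bigcup\G^0_{s_0}\subset J_{s_0\ha 0}\cup\overline{G^0_{s_0}}\cup J_{s_0\ha 1}$ that you then use.

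There are two genuine gaps. First, in Part~(\ref{tw4-2}) your disjointness argument is not valid: the claim that distinct gaps ``live in disjoint basic subintervals'' is false, because for indices $i$ with $a_i\le\frac13$ the intervals $J_u$ at that level overlap (this is exactly the content of Proposition~\ref{lem}(\ref{1-4})). The gaps in $\G_\emptyset$ \emph{are} pairwise disjoint, but this requires a real argument---for instance, an induction showing that $\G^0_{s_0}$ restricted to $J_{s_0\ha 0}$ and to $J_{s_0\ha 1}$ are again families of the same shape, combined with the separation you established in Case~1. Your counting $|\G^0_{s_0}(n)|=3^{n-1}$ is fine, but without disjointness the measure formula does not follow.

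Second, Case~2 of Part~(\ref{tw4-1}) is only a plan. The identities you propose, $\G^0_t\cap J_{t\ha 1}=\G^0_{t\ha 1\ha 0^{(k_{m+1}-k_m-1)}}$ and its mirror, are correct and do give the reduction to Case~1, but verifying them is the entire content of this case. One must check at every rank that the right offspring $\bar G^0_{t\ha 1}(n)$ of $G^0_t$ coincides with the leftmost gap $\bar G^0_v(n)$ of the new family (where $v=t\ha 1\ha 0^{(k_{m+1}-k_m-1)}$), and that no gap of $\G^0_t$ falls into $J_{t\ha 1}$ beyond those accounted for by $\G^0_v$. You flag this as the ``main obstacle'' but do not carry it out; until it is, Case~2 is not proved.
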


Let $m \in \N$, $s, u \in \{ 0,1,2\} ^{k_m-1}$. We say that the intervals $J_s$ and $J_u$ are \emph{associated} if $N(0,s) = N(1,u)\in \{ k_{m-1}+1, \dots ,k_{m}-1\} $ and for $N:=N(0,s)=N(1,u)$ the equalities $s|(N-1) = u|(N-1)$ and $u_N = s_N -1$ hold. We then say that $J_u$ is an interval covering $\G ^0_s$ and $J_s$ is an interval covering $\G^1_u$. 
Also, for any $G \in \G^0_s$ we say that $J_u$ is an interval covering $G$, and we write $G \subset_* J_u$. Similarly, for any $H \in \G^1_u$ we say that $J_s$ is an interval covering $H$, and we write $H \subset_* J_s$.

%% $i \in \{ 0,1\} $ such that $N(i,s) \in \{ k_{m-1}+1, \dots ,k_{m}-1\}$ define an interval $J_u$ covering the family $\G ^i_s$, where $u \in \{ 0,1,2\} ^{k_m-1}$. 
%If $N = N(0,s) > k_{m-1}$, then the interval $J_u$, where $$u=(s|(N-1)) \,\hat{}\,(s_{N}-1) \,\hat{}\, 2^{(k_m-N-1)} \in \{0,1,2\}^{k_m-1},$$ is called an interval covering $\G ^0_s$.
%
%If $N = N(1,s) > k_{m-1}$, then the interval $J_u$, where $$u=(s|(N-1)) \,\hat{}\,(s_{N}+1) \,\hat{}\, 0^{(k_m-N-1)} \in \{0,1,2\}^{k_m-1},$$ is called an interval covering $\G ^1_s$.

In the sequel we will show that, under some additional assumptions, if an interval $J_u$ is the interval covering $\G^i_s$, then every gap $G \in \G^i_s$ is included in the set $\bigcap_{n\in \N} \bigcup_{t\in \{0,1,2\}^n} J_{u\ha t}$, that is, it is covered by a part of the set $C(a)-C(a)$ which comes from $J_u$.  
%{\red W dalszej części pracy pokażemy przy pewnych dodatkowych założeniach, że jeśli przedział $J_u$ jest przedziałem pokrywającym rodzinę $\G^i_s$, to każda luka $G \in \G^i_s$ zawiera się w zbiorze $\bigcap_{n\in \N} \bigcup_{t\in \{0,1,2\}^n} J_{u\ha t}$, tzn. jest pokryta przez część zbioru $C(a)-C(a)$ powstałą z $J_u$.} 
%
%\begin{remark}
%If $J_u$ is an interval covering $\G ^i_s$, then $J_s$ is an interval covering $\G ^{1-i}_u$. \\
%Indeed, if $m \in \N$, $s \in \{ 0,1,2\} ^{k_m-1}$, $ N(0,s) > k_{m-1}$ and $J_u$ is an interval covering $\G^0_s$, then it is easy to see that $N(1,u) = N(0,s),$ and hence, by definition, $J_{s}$ is an interval covering $\G ^1_{u}$. 
%Similarly, if $N(1,s) > k_{m-1}$ and $J_u$ is an interval covering $\G^0_s$, then $N(0,u) = N(1,s),$ and thus $J_{s}$ is an interval covering $\G ^0_{u}$. 
%Such intervals $J_u$ and $J_{s}$ will be called \emph{associated}.
%\end{remark}

\begin{lemma} \label{uw}
Let $k\geq k_{0}$, $t\in \left\{ 0,1,2\right\} ^{k}$ and $m\in \mathbb{N}$ be such that $k_{m-1}\leq k<k_{m}$. Let $n \geq m$, $t \prec s \in \{0,1,2\}^{k_n-1}$, $i \in \{0,1\}$. If $G^i_s \notin \G_t$, then $N(i,s) > k$. \end{lemma}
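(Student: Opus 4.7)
I would prove the contrapositive: assume $N(i,s)\le k$ and show $G^i_s\in \G_t$. The whole argument is a direct unfolding of the definitions; the only delicate point is keeping track of how the indices $k$, $k_{m-1}$, $k_m$, $k_n$ interact through the concatenations $0^{(\cdot)}$ and $2^{(\cdot)}$. I would first dispose of the case $i=0$ by showing $G^0_s$ is actually a leftmost gap $\bar G^0_t(n)$, then locate it inside $\G^0_{t\ha 0^{(k_m-k-1)}}\subset \G_t$; the case $i=1$ is symmetric.

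First let $i=0$. The hypothesis $N(0,s)\le k$ says $s_{k+1}=\dots=s_{k_n-1}=0$, and since $t\prec s$ and $|s|=k_n-1$, this forces $s=t\ha 0^{(k_n-k-1)}$. By the definition of the leftmost gap of rank $n$ in $J_t$,
$$G^0_s = G^0_{t\ha 0^{(k_n-k-1)}}=\bar G^0_t(n).$$
Now set $t':=t\ha 0^{(k_m-k-1)}$, a sequence of length $k_m-1$ (which is well-defined because $k_{m-1}\le k<k_m$, so $k_m-k-1\ge 0$). Concatenating further gives
$$t'\ha 0^{(k_n-k_m)} = t\ha 0^{(k_m-k-1)+(k_n-k_m)} = t\ha 0^{(k_n-k-1)},$$
and since $t'$ has length $k_m-1$, the leftmost gap of rank $n$ in $J_{t'}$ is
$\bar G^0_{t'}(n)=G^0_{t'\ha 0^{(k_n-k_m)}}=G^0_s.$ Finally, by the inductive definition of $\G^0_{t'}(n)$ (where $\bar G^0_{t'}(n)$ is explicitly listed, or equals $G^0_{t'}$ when $n=m$), we get
$$G^0_s=\bar G^0_{t'}(n)\in \G^0_{t'}(n)\subset \G^0_{t'}\subset \G_t,$$
as required.

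The case $i=1$ is entirely analogous, replacing $0$ by $2$: the assumption $N(1,s)\le k$ forces $s=t\ha 2^{(k_n-k-1)}$, whence $G^1_s=\bar G^1_t(n)=\bar G^1_{t''}(n)$ with $t'':=t\ha 2^{(k_m-k-1)}$, and this element lies in $\G^1_{t''}\subset \G_t$ by definition. I expect no serious obstacle — the statement is really a labelling lemma — but the trickiest step is the arithmetic identity $(k_m-k-1)+(k_n-k_m)=k_n-k-1$ that guarantees the two descriptions of the leftmost (resp.\ rightmost) rank-$n$ gap in $J_t$ agree with the corresponding one in $J_{t'}$ (resp.\ $J_{t''}$); once this is verified, membership in $\G_t$ follows immediately from how $\bar G^i_s(n)$ is built into the definition of $\G^i_s(n)$.
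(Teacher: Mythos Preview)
Your proof is correct and follows essentially the same approach as the paper's: both argue by contrapositive, use $N(i,s)\le k$ together with $t\prec s$ to identify $s=t\ha(2i)^{(k_n-1-k)}$, and then conclude $G^i_s\in\G^i_{t\ha(2i)^{(k_m-k-1)}}\subset\G_t$. The paper's proof is simply terser, handling both cases $i\in\{0,1\}$ at once via the notation $(2i)$ and skipping the explicit identification with $\bar G^i_{t'}(n)$ that you spell out.
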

\begin{proof}
Suppose that $N=N(i,s) \leq k$. Since $t = s|k$, we have $$s = (s|N) \ha (2i)^{(k_n-1-N)} = t \ha (2i)^{(k_n-1-k)},$$ so $G^i_s \in \G^i_{t\ha (2i)^{(k_m-k-1)}} \subset \G_t.$ 
\end{proof}
Let $s \in \{0,1,2\}^k$, $k \in \{k_{m-1}, \dots, k_m-1\}, m \in \N$. For $n \geq m$ define gaps of rank $n$:

$$G^0(s,n) := G^0_{s \ha 0^{(k_m-k-1)} \ha 1 \ha 0^{(k_{m+1}-k_m -1)} \ha 1 \ha \dots \ha 1 \ha 0^{(k_n-k_{n-1}-1)}},$$
$$G^1(s,n): = G^1_{s \ha 2^{(k_m-k-1)} \ha 1 \ha 2^{(k_{m+1}-k_m -1)} \ha 1 \ha \dots \ha 1 \ha 2^{(k_n-k_{n-1}-1)}}.$$
In particular,
$$G^0(s,m) = \bar{G}^0_s(m)$$
$$G^1(s,m) = \bar{G}^1_{s}(m).$$

\begin{lemma} \label{mon}
Let $m \in \N$, $k \in \{k_{m-1}, \dots, k_m-1\}$ and $s \in \{0,1,2\}^k$. The sequences $(r(G^0(s,n)))_{n \geq m}$ and $(l(G^0(s,n)))_{n \geq m}$ are increasing, and the sequences $(l(G^1(s,n)))_{n \geq m}$ and $(r(G^1(s,n)))_{n \geq m}$ are decreasing. Moreover, for $n \geq m$, \begin{equation} \label{r}
r(G^0(s,n+1))-r(G^0(s,n)) = l(G^1(s,n)) - l(G^1(s,n+1)) = d_{k_{n+1}-1}-d_{k_{n+1}}
\end{equation} and 
\begin{equation}\label{lG}
l(G^1(s,n)) = r(J_s) -\sum_{i=m}^n (d_{k_i-1}-d_{k_{i}}),
\end{equation} 
\begin{equation}\label{rG}
r(G^0(s,n)) = l(J_s) + \sum_{i=m}^n (d_{k_i-1}-d_{k_{i}}).
\end{equation}
\end{lemma}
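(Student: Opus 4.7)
The plan is to deduce everything from the telescoping identity (\ref{r}), which in turn reduces to a single application of Proposition \ref{lem}(\ref{1-2a}) at each step, together with the baseline formulas in (\ref{1-2}) and the gap description in (\ref{1-3}). Write $\sigma_n \in \{0,1,2\}^{k_n-1}$ for the index defining $G^0(s,n)$, so that $\sigma_{n+1} = \sigma_n \ha 1 \ha 0^{(k_{n+1}-k_n-1)}$, and $\tau_n$ analogously for $G^1(s,n)$ with $0$ replaced by $2$.

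To prove (\ref{r}), Proposition \ref{lem}(\ref{1-3}) gives $r(G^0(s,n)) = l(J_{\sigma_n \ha 1})$. Extending $\sigma_n \ha 1$ by trailing zeros to length $k_{n+1}$ preserves its left endpoint by (\ref{1-2}), and the extended index agrees with $\sigma_{n+1} \ha 1$ everywhere except at coordinate $k_{n+1}$, where they are $0$ and $1$ respectively. Applying (\ref{1-2a}) to this pair yields exactly $d_{k_{n+1}-1} - d_{k_{n+1}}$, which is the first half of (\ref{r}); the second half is the symmetric computation with $\tau_n$ and trailing twos.

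For the closed forms (\ref{rG}) and (\ref{lG}) I would telescope (\ref{r}) from the base $n = m$. At the base, $r(G^0(s,m)) = l(J_{s \ha 0^{(k_m-k-1)} \ha 1})$; comparing this with $l(J_{s \ha 0^{(k_m-k)}}) = l(J_s)$ via (\ref{1-2a}) and (\ref{1-2}) yields $r(G^0(s,m)) = l(J_s) + (d_{k_m - 1} - d_{k_m})$, which is (\ref{rG}) at $n = m$; the $G^1$ version is symmetric, and summing (\ref{r}) from $m+1$ to $n$ completes both identities.

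The monotonicity of $(r(G^0(s,n)))$ and $(l(G^1(s,n)))$ then follows from (\ref{r}), since $a_{k_{n+1}} > \frac{1}{3}$ combined with (\ref{1-3}) forces $d_{k_{n+1}-1} - 3 d_{k_{n+1}} > 0$ and hence $d_{k_{n+1}-1} - d_{k_{n+1}} > 0$. For the inner sequences $(l(G^0(s,n)))$ and $(r(G^1(s,n)))$, write $l(G^0(s,n)) = r(G^0(s,n)) - (d_{k_n-1} - 3 d_{k_n})$ by (\ref{1-3}); substituting (\ref{r}) reduces the desired strict increase to $2 d_{k_{n+1}} + (d_{k_n-1} - 3 d_{k_n}) > 0$, which holds because both summands are positive, and the symmetric calculation handles $(r(G^1(s,n)))$. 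I expect the only obstacle to be purely bookkeeping: keeping straight how $\sigma_{n+1}$ differs from $\sigma_n \ha 1$ extended by zeros (namely in the single coordinate $k_{n+1}$), which is the whole content of the increment formula.
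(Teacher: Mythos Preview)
Your argument is correct and follows essentially the same route as the paper: both compute the increment (\ref{r}) by extending $\sigma_n\ha 1$ with trailing zeros and applying Proposition~\ref{lem}(\ref{1-2a}) at the single differing coordinate $k_{n+1}$, establish the base case $n=m$ in the same way, and telescope to obtain (\ref{lG}) and (\ref{rG}). The only minor variation is in the monotonicity of the inner endpoints $l(G^0(s,n))$ and $r(G^1(s,n))$: you compute the exact increment $2d_{k_{n+1}}+(d_{k_n-1}-3d_{k_n})$ via the gap-length formula from Proposition~\ref{lem}(\ref{1-3}), whereas the paper uses the coarser one-line estimate $l(G^0(s,n+1))-l(G^0(s,n))>r(G^0_{\sigma_n})-l(G^0_{\sigma_n})>0$.
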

\begin{proof}
We will show that the sequences $(l(G^1(s,n)))$ and $(r(G^1(s,n)))$ are decreasing. Put $t = s \ha 2^{(k_{m}-k-1)} \ha 1 \dots \ha 1 \ha 2^{(k_n-k_{n-1}-1)}.$ By Proposition \ref{lem} (\ref{1-2}), (\ref{1-3}) and (\ref{1-2a}), we have
$$l(G^1(s, n+1))-l(G^1(s,n)) = l(G^1_{t \ha 1 \ha 2^{(k_{n+1}-k_{n}-1)}}) - l(G^1_t)$$
$$= r(J_{t \ha 1 \ha 2^{(k_{n+1}-k_{n}-1)} \ha 1}) - r(J_{t \ha 1 })= r(J_{t \ha 1 \ha 2^{(k_{n+1}-k_{n}-1)} \ha 1}) - r(J_{t \ha1 \ha 2^{(k_{n+1}-k_{n}-1)}\ha 2}) = d_{k_{n+1}}-d_{k_{n+1}-1} < 0$$
and
$$r(G^1(s, n+1))-r(G^1(s, n)) = r(G^1_{t \ha 1 \ha 2^{(k_{n+1}-k_{n}-1)}}) - r(G^1_t)< r(J_{t \ha 1}) - r(G^1_t)=l(G^1_t)-r(G^1_t)<0.$$
Therefore, the sequences $(l(G^1(s,n)))$ and $(r(G^1(s,n)))$ are decreasing. 
Similarly, for \\$v = s \ha 0^{(k_{m}-k-1)} \ha 1 \dots \ha 1 \ha 0^{(k_n-k_{n-1}-1)}$, $$ r(G^0(s, n+1)) - r(G^0(s, n)) = l(J_{v \ha 1 \ha 0^{(k_{n+1}-k_n-1)} \ha 1}) - l(J_{v \ha 1 \ha 0^{(k_{n+1}-k_n-1)}\ha 0})= d_{k_{n+1}-1}-d_{k_{n+1}} >0$$
and
$$l(G^0(s, n+1)) - l(G^0(s, n)) > r(G_v^0)  - l(G^0_v)>0.$$
Hence the sequences $(r(G^0(s,n)))$ and $(l(G^0(s,n)))$ are increasing. We have also proved (\ref{r}).
Moreover, by Proposition \ref{lem} (\ref{1-2a}), (\ref{1-2}), we have $$l(G^1(s,m)) = l(G^1_{s \ha 2^{(k_{m}-k-1)}}) = r(J_{s \ha 2^{(k_{m}-k-1)} \ha 1}) = r(J_{s \ha 2^{(k_{m}-k)} }) -(d_{k_m-1}-d_{k_{m}}) = r(J_s) - d_{k_m-1}+d_{k_{m}}$$ and
$$r(G^0(s,m))=r(G^0_{s \ha 0^{(k_{m}-k-1)}}) = r(J_{s \ha 0^{(k_{m}-k-1)} \ha 1}) = l(J_{s \ha 0^{(k_{m}-k)} }) +d_{k_m-1}-d_{k_{m}} = l(J_s) +d_{k_m-1}-d_{k_{m}}.$$
So, by (\ref{r}), for $n \geq m$ we have
$$l(G^1(s,n)) = l(G^1(s,n-1)) - (d_{k_n-1}-d_{k_n}) = \dots = l(G^1(s,m)) - \sum_{i=m+1}^n (d_{k_i-1}-d_{k_{i}}) = $$$$r(J_s) - \sum_{i=m}^n (d_{k_i-1}-d_{k_{i}})$$
and 
$$r(G^0(s,n)) = r(G^0(s,m)) + \sum_{i=m+1}^n (d_{k_i-1}-d_{k_{i}}) = l(J_s) + \sum_{i=m}^n (d_{k_i-1}-d_{k_{i}}).$$

\end{proof}

\begin{lemma} \label{lemat 2}
Let $m \in \N$ and $s \in \{0,1,2\}^{k_m-1}$, $n > m$. Then
\begin{equation}\label{l1}
r(G^1(s\ha 0, n)) < l(G^0_s) <r(G^0_s) < l(G^0(s, n)),
\end{equation}
\begin{equation}\label{l2}
r(G^1(s, n)) < l(G^1_s) < r(G^1_s) < l(G^0(s \ha 2, n)).
\end{equation}
Moreover, for all $G \in \G^0_s(n)$ 
\begin{equation} \label{1}
\left[r(G) \leq r(G^0(s \ha 0,n))\right] \mbox{ or } \left[ l(G^1(s\ha 0, n)) \leq l(G)< r(G) \leq r(G^0(s, n))\right].
\end{equation}
and for all $G\in \G^1_s(n)$  
\begin{equation} \label{2}
\left[l(G) \geq l(G^1(s \ha 2,n))\right] \mbox{ or } \left[ l(G^1(s, n)) \leq l(G) < r(G) \leq r(G^0(s \ha 2, n))\right].
\end{equation}

\end{lemma}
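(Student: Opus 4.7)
The plan is to prove (\ref{l1}) and (\ref{l2}) first as direct consequences of the nesting of the intervals $J_v$ and the monotonicity statements in Lemma \ref{mon}, and then to prove (\ref{1}) and (\ref{2}) by induction on $n>m$, using (\ref{l1}) and (\ref{l2}) as the anchor. Since (\ref{l2}) and (\ref{2}) are mirror-symmetric to (\ref{l1}) and (\ref{1}), I focus throughout on the left-hand side.

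For (\ref{l1}), the gap $G^1(s\ha 0,n)$ lies inside $J_{s\ha 0\ha 2^{(k_{m+1}-k_m-1)}}$, which by Proposition \ref{lem}(\ref{1-2}) shares its right endpoint with $J_{s\ha 0}$; Proposition \ref{lem}(\ref{1-3}) then gives $r(G^1(s\ha 0,m+1))<r(J_{s\ha 0})=l(G^0_s)$, and the decreasing monotonicity of $r(G^1(s\ha 0,\cdot))$ from Lemma \ref{mon} extends this to every $n>m$. The inequality $r(G^0_s)=l(J_{s\ha 1})<l(G^0(s,n))$ is obtained by the mirror argument, since $G^0(s,n)\subset J_{s\ha 1\ha 0^{(k_{m+1}-k_m-1)}}$ shares its left endpoint with $J_{s\ha 1}$ and $l(G^0(s,\cdot))$ is increasing. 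Statement (\ref{l2}) is the reflection.

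For (\ref{1}), I induct on $n>m$. Base case $n=m+1$: the family $\G^0_s(m+1)$ has three elements, and unrolling subscripts yields $\bar{G}^0_s(m+1)=G^0(s\ha 0,m+1)$, $\bar{G}^0_{s\ha 1}(m+1)=G^0(s,m+1)$, and $\bar{G}^1_{s\ha 0}(m+1)=G^1(s\ha 0,m+1)$; the first satisfies the first alternative with equality, and the other two satisfy the second alternative with the complementary left/right bounds provided by (\ref{l1}). For the inductive step, assume (\ref{1}) holds up to rank $n$. The gap $\bar{G}^0_s(n+1)$ has $r(\bar{G}^0_s(n+1))=l(J_s)+d_{k_{n+1}-1}-d_{k_{n+1}}$, while Lemma \ref{mon} applied to $s\ha 0$ gives $r(G^0(s\ha 0,n+1))=l(J_s)+\sum_{i=m+1}^{n+1}(d_{k_i-1}-d_{k_i})$, so the first alternative holds. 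Every other gap in $\G^0_s(n+1)$ equals $\bar{G}^1_{t\ha i}(n+1)$ or $\bar{G}^0_{t\ha(i+1)}(n+1)$ for some $G^i_t\in\G^0_s(l)$ with $m\leq l\leq n$ and $i\in\{0,1\}$; apply the IH to $G^i_t$, or invoke (\ref{l1}) when $l=m$ (which places $G^0_s$ in the second alternative).

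Using Proposition \ref{lem}(\ref{1-2},\ref{1-3}) one derives the identities $r(\bar{G}^1_{t\ha i}(n+1))=l(G^i_t)-2d_{k_{n+1}}$, $l(\bar{G}^1_{t\ha i}(n+1))=l(G^i_t)-(d_{k_{n+1}-1}-d_{k_{n+1}})$, $l(\bar{G}^0_{t\ha(i+1)}(n+1))=r(G^i_t)+2d_{k_{n+1}}$, and $r(\bar{G}^0_{t\ha(i+1)}(n+1))=r(G^i_t)+(d_{k_{n+1}-1}-d_{k_{n+1}})$. If the IH places $G^i_t$ in the first alternative, both child right endpoints are at most $r(G^0(s\ha 0,l))+(d_{k_{n+1}-1}-d_{k_{n+1}})\leq r(G^0(s\ha 0,n+1))$, where the last step uses that $\sum_{i=l+1}^{n+1}(d_{k_i-1}-d_{k_i})$ contains its $i=n+1$ term. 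If the IH places $G^i_t$ in the second alternative, the same telescoping shows both child $r$-values are $\leq r(G^0(s,n+1))$ and both child $l$-values are $\geq l(G^1(s\ha 0,n+1))$, each subcase reducing to the trivial $\sum_{i=l+1}^{n}(d_{k_i-1}-d_{k_i})\geq 0$. The main obstacle is the four-way case split (two child-types versus two IH-alternatives), but each case reduces mechanically via Lemma \ref{mon} and the identities above; statement (\ref{2}) follows by mirror reflection.
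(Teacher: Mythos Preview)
Your proof is correct and follows essentially the same route as the paper's: (\ref{l1})--(\ref{l2}) via direct computation at rank $m+1$ combined with the monotonicity of Lemma~\ref{mon}, then (\ref{1})--(\ref{2}) by strong induction on $n>m$ with the same three-way split over the members of $\G^0_s(n+1)$ and the same two-way subcase on which alternative of the induction hypothesis applies to the parent gap $G^i_t$. The only cosmetic difference is that you precompute the four endpoint identities $r(\bar{G}^1_{t\ha i}(n+1))$, $l(\bar{G}^1_{t\ha i}(n+1))$, $l(\bar{G}^0_{t\ha(i+1)}(n+1))$, $r(\bar{G}^0_{t\ha(i+1)}(n+1))$ once and reuse them, whereas the paper rederives them case by case; and you isolate the $l=m$ case (where the IH does not literally apply to $G^0_s\in\G^0_s(m)$) by invoking (\ref{l1}) directly, which the paper's phrasing ``for all $l\le n$'' leaves implicit.
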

\begin{proof}
%In the beginning, we will show that for any $j > n$ we have $l(G^0_s) \geq l(G^1(s\ha 0, j)) \wedge r(G^0_s) \leq r(G^0(s, j)).$ 

%We also have $G^0_s = G^0(s,n),$ so $r(G^0_s) = r(G^0(s,n)) < r(G^0(s,j))$ for $j > n$. Hence it suffices to show that $l(G^0_s) \geq l(G^1(s\ha 0, n+1)).$ 
By Proposition \ref{lem} (\ref{1-2}), (\ref{1-3}), we have
$$l(G^0_s) - r(G^1(s\ha 0, m+1)) = r(J_{s \ha 0})-r(G^1_{s \ha 0 \ha 2^{(k_{m+1}-k_m-1)}})= r(J_{s \ha 0})- l(J_{s \ha 0 \ha 2^{(k_{m+1}-k_m)}}) $$$$= r(J_{s \ha 0 \ha 2^{(k_{m+1}-k_m)}}) - l(J_{s \ha 0 \ha 2^{(k_{m+1}-k_m)}}) = |J_{s \ha 0 \ha 2^{(k_{m+1}-k_m)}}| > 0$$
and
$$l(G^0(s,m+1)) -r(G^0_s) =l(G^0_{s\ha 1 \ha 0^{(k_{m+1}-k_m-1)}}) - l(J_{s\ha 1}) =r(J_{s\ha 1 \ha 0^{(k_{m+1}-k_m)}}) - l(J_{s\ha 1}) > l(J_{s\ha 1}) - l(J_{s \ha 1}) = 0.$$
By Lemma \ref{mon}, the sequence $(r(G^1(s\ha 0, n)))$ is decreasing, and the sequence $(l(G^0(s, n)))$ is increasing. Hence
$$r(G^1(s\ha 0, n))\leq r(G^1(s\ha 0, m+1)) < l(G^0_s) < r(G^0_s) <l(G^0(s,m+1))  \leq l(G^0(s, n)), $$
which proves (\ref{l1}).
The proof of (\ref{l2}) is analogous.

Now, we will inductively show that for any $n > m$, if $G \in \G^0_s(n),$ then (\ref{1}) holds. 
%We have already proved that (\ref{1}) holds for $j=n$.  
Suppose that $n = m+1$. 
By the definition,
$$\G^0_s(m+1) = \{\bar{G}^0_s(m+1), \bar{G}^1_{s\ha 0}(m+1), \bar{G}^0_{s\ha 1}(m+1) \},$$ 
so there are three possible cases.

1. $G =\bar{G}^0_s(m+1)= G^0_{s \ha 0^{(k_{m+1}-k_m-1)}}$. Then $G = G^0(s \ha 0,n)$, so the first condition from (\ref{1}) is satisfied.

2. $G = \bar{G}^1_{s\ha 0}(m+1)= G^1_{s \ha 0 \ha 2^{(k_{m+1}-k_m-1)}}$. Then $G = G^1(s \ha 0,n)$. Since, by (\ref{l1}), $$l(G^1(s\ha 0,n))=l(G) <r(G)=r(G^1(s\ha 0,n)) < l(G^0_s) < r(G^0_s) <l(G^0(s,n))< r(G^0(s,n)),$$ the second condition from (\ref{1}) is satisfied. 

3. $G =\bar{G}^0_{s\ha 1}(m+1)= G^0_{s \ha 1 \ha 0^{(k_{m+1}-k_m-1)}}$. Then $G = G^0(s,n)$. Since, by (\ref{l1}), $$l(G^1(s\ha 0,n))<r(G^1(s\ha 0,n))< l(G^0_s) < r(G^0_s) <l(G^0(s,n))=l(G)<  r(G)= r(G^0(s,n)),$$ the second condition from (\ref{1}) is satisfied. 

Let $n > m$. Assume that for all $l \leq n$ and all $G \in \G^0_s(l)$, (\ref{1}) holds. We will show that for all $G \in \G^0_s(n+1)$ the condition (\ref{1}) is satisfied too. By the definition, 
$$\G^0_s(n+1)=
\{\bar{G}^0_{s}(n+1)\} \cup \bigcup_{l \leq n}\, \bigcup_{i \in \{0,1\}} \,\bigcup_{t:G_t^i \in \G^0_s(l)} \{\bar{G}^1_{t\,\hat{}\,i}(n+1), \bar{G}^0_{t\,\hat{}\,(i+1)}(n+1)\}, 
$$ so there are three possible cases.

1. $G = \bar{G}^0_{s}(n+1)=G^0_{v}$, where $v=s \ha 0^{(k_{n+1}-k_m-1)}$. We have $G^0(s \ha 0,n+1) = G^0_t$, where
$$t = s \ha 0\ha 0^{(k_{m+1}-k_m-1)} \ha 1 \ha \dots \ha 1 \ha 0^{(k_{n+1}-k_{n}-1)}.$$ 
By Proposition \ref{lem} (\ref{1-2a}), $l(J_t) \geq l(J_v)$, and thus, by Proposition \ref{lem} (\ref{1-3}), $r(G^0_v)=l(J_{v \ha 1}) \leq l(J_{t \ha 1}) =r(G^0_t).$ Hence the first condition in (\ref{1}) is satisfied.

2. $G = \bar{G}^1_{t\,\hat{}\,i}(n+1)= G^1_{t \ha i \ha 2^{(k_{n+1}-k_l-1)}}$ for some $l \geq m$, $t \in \{0,1,2\}^{k_l-1}$ and $i \in \{0,1\}$ such that $G^i_t \in \G^0_s(l)$. By the induction hypothesis, $r(G^i_t) \leq r(G^0(s \ha 0,l))$ or $ l(G^1(s\ha 0, l) \leq l(G^i_t) < r(G^i_t) \leq r(G^0(s, l)).$
We also have, by Proposition \ref{lem} (\ref{1-3}), $r(G^i_t) > l(G^i_t) = r(J_{t \ha i}) > r(G).$
Consider two subcases.

2.1. $r(G^i_t) \leq r(G^0(s \ha 0,l))$. Then, by Lemma \ref{mon}, we have $$r(G^0(s\ha 0,n+1)) \geq r(G^0(s\ha 0,l)) \geq r(G^i_t) > r(G),$$
so the first condition in (\ref{1}) is satisfied.

2.2. $ l(G^1(s\ha 0, l)) \leq l(G^i_t) < r(G^i_t) \leq r(G^0(s, l))$. By Lemma \ref{mon}, $$r(G^0(s, n+1))\geq r(G^0(s, l)) \geq r(G^i_t) >  r(G).$$ By Proposition \ref{lem} (\ref{1-3}), (\ref{1-2a}), (\ref{1-2}) and Lemma \ref{mon}, we have
$$l(G) =l(G^1_{t \ha i \ha 2^{(k_{n+1}-k_l-1)}})= r(J_{t \ha i \ha 2^{(k_{n+1}-k_l-1)} \ha 1}) = r(J_{t \ha i \ha 2^{(k_{n+1}-k_l)}}) - d_{k_{n}}+d_{k_{n+1}} = r(J_{t \ha i}) - d_{k_{n}}+d_{k_{n+1}} $$$$= l(G^i_t)- d_{k_{n}}+d_{k_{n+1}} \geq l(G^1(s\ha 0, l)) - d_{k_{n}}+d_{k_{n+1}} \geq l(G^1(s\ha 0, n)) - d_{k_{n}}+d_{k_{n+1}} \stackrel{(\ref{lG})}{=} l(G^1(s\ha 0, n+1)),$$
so the second condition in (\ref{1}) is satisfied.

3. $G = \bar{G}^0_{t\,\hat{}\,(i+1)}(n+1) = G^0_{t \ha (i+1) \ha 0^{(k_{n+1}-k_l-1)}}$ for some $l \geq m$, $t \in \{0,1,2\}^{k_l-1}$ and $i \in \{0,1\}$ such that $G^i_t \in \G^0_s(l)$. By the induction hypothesis, $r(G^i_t) \leq r(G^0(s \ha 0,l))$ or $l(G^1(s\ha 0, l)) \leq l(G^i_t) < r(G^i_t) \leq r(G^0(s, l))).$ Consider two subcases.

3.1. $r(G^i_t) \leq r(G^0(s \ha 0,l))$. Then, using Proposition \ref{lem} (\ref{1-2}), (\ref{1-2a}), (\ref{1-3}) and Lemma \ref{mon}, we obtain
$$r(G) = l(J_{t \ha (i+1) \ha 0^{(k_{n+1}-k_l-1)} \ha 1}) = l(J_{t \ha (i+1) \ha 0^{(k_{n+1}-k_l-1)}}) + d_{k_{n}}-d_{k_{n+1}} = l(J_{t \ha (i+1)}) + d_{k_{n}}-d_{k_{n+1}} $$$$= r(G^i_t) +d_{k_{n}}-d_{k_{n+1}} \leq r(G^0(s \ha 0,l)) + d_{k_{n}}-d_{k_{n+1}} \leq r(G^0(s \ha 0,n)) + d_{k_{n}}-d_{k_{n+1}} \stackrel{(\ref{rG})}{=} r(G^0(s \ha 0,n+1)),$$
so the first condition in (\ref{1}) is satisfied. 

3.2. $l(G^1(s\ha 0, l)) \leq l(G^i_t) < r(G^i_t) \leq r(G^0(s, l)))$. By Proposition \ref{lem} (\ref{1-3}), we have $$l(G^1(s\ha 0, l)) \leq l(G^i_t) <r(G^i_t) = l(J_{t \ha (i+1)}) \leq l(G).$$  Using calculations from 3.1., 
Lemma \ref{mon} 
and the assumption, we get
$$r(G) = r(G^i_t) + d_{k_{n}}-d_{k_{n+1}} \leq r(G^0(s, l)) + d_{k_{n}}-d_{k_{n+1}} \leq r(G^0(s, n)) + d_{k_{n}}-d_{k_{n+1}} = r(G^0(s, n+1)),$$
so the second condition in (\ref{1}) is satisfied.

We have proved that if $G \in \G^0_s(n+1),$ then the condition (\ref{1}) is satisfied. By the induction, we have the assertion.

The proof that (\ref{2}) is satisfied for any $G \in \G^1_s(n)$ is similar.
\end{proof}

\section{Main results}
Now, we can prove the main theorem of the paper.
\begin{theorem} \label{main}
Let $a = (a_n) \in (0,1)^{\N}$. Assume that the set $\{n \in \N \colon a_n > \frac{1}{3}\}$ is infinite. Let $k_0$ be such that $a_{k_0+1} < \frac{1}{3}.$ Let the sequence $\left( k_n \right)$ consist of all indices greater than $k_0$, for which $a_{k_n} > \frac{1}{3}$. Let $k\geq k_{0}$, $t\in \left\{ 0,1,2\right\} ^{k}$ and $m\in \mathbb{N}$ be such that $k_{m-1}\leq k<k_{m}$. Denote
$$ \delta _n := \min \{ 3d_i - d_{i-1} \colon i \in \{k_{n-1}+1, \dots, k_n-1\}\},$$
$$ \Delta _n := \max \{ 3d_i - d_{i-1} \colon i \in \{k_{n-1}+1, \dots, k_n-1\}\},$$
where $\max \emptyset =-\infty, \min \emptyset =\infty$.
Put
$$m_n := \min \{\delta_{n}-(d_{k_{n}-1}-d_{k_{n}}), 4d_{k_{n}}-\Delta_{n} \} $$ for $n \in \N$.
If for any $n\in\N$ we have
$$(*) \,\,\, m_n \geq 2\cdot\sum_{i=n+1}^{\infty}(d_{k_i-1}-d_{k_i}), $$
then the set $C(a) - C(a)$ is a Cantorval. Moreover, if $k=k_{0}=0$ and $t=\emptyset $, then $C\left( a
\right) -C\left( a \right) =J_{t}\setminus \bigcup \G_t $
and
\begin{equation*}
\left\vert C\left(a \right) -C\left(a \right) \right\vert
=2-2\sum_{n=1}^{\infty }3^{n-1}\left( d_{k_{n}-1}-3d_{k_{n}}\right) .
\end{equation*}
\end{theorem}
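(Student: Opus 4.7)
The main technical step is to establish the inclusion
\begin{equation*}
J_t \setminus \bigcup \G_t \subset C(a) - C(a).
\end{equation*}
Granting this, the ``moreover'' equality follows at once when $k = k_0 = 0$ and $t = \emptyset$, since Theorem \ref{tw4}(\ref{tw4-2}) supplies the reverse inclusion $C(a) - C(a) \subset [-1,1] \setminus \bigcup \G_\emptyset = J_\emptyset \setminus \bigcup \G_\emptyset$; the Lebesgue measure formula is then immediate by subtracting the value of $|\bigcup \G_\emptyset|$ computed in Theorem \ref{tw4}(\ref{tw4-2}) from $|J_\emptyset|=2$. The Cantorval property of $C(a) - C(a)$ follows because the hypothesis that $\{n : a_n > 1/3\}$ is infinite makes the family $\G_t$ infinite, Theorem \ref{tw4}(\ref{tw4-1}) combined with the main inclusion produces proper components of $C(a) - C(a)$, and the inductive build of the families $\G^i_s(n)$ together with the existence of associated covering intervals makes both gaps and proper components accumulate on both endpoints of every gap in $\G_t$.

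The heart of the argument is the following \emph{covering statement}: under $(*)$, whenever $J_u$ is the interval covering $\G^i_s$,
\begin{equation*}
\bigcup \G^i_s \subset \bigcap_{n \in \N} \bigcup_{r \in \{0,1,2\}^n} J_{u \ha r}.
\end{equation*}
I would prove this by induction on the level $n$ of descent within $J_u$, with the inductive hypothesis that every gap $G \in \G^i_s$ is contained in some $J_{u \ha r}$ with $|r|=n$. The critical input is the dichotomy of Lemma \ref{lemat 2}: for $G \in \G^0_s(j)$, either $G$ lies to the left of $r(G^0(s \ha 0, j))$, or it is trapped in the middle band $[l(G^1(s \ha 0, j)), r(G^0(s, j))]$, with a symmetric statement for $\G^1_s(j)$. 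By equations (\ref{lG}) and (\ref{rG}), the positions of these barriers are controlled by the drift $\sum_{i=m+1}^j (d_{k_i-1} - d_{k_i})$, which is precisely the tail that $(*)$ bounds by $\tfrac{1}{2} m_n$. The two pieces of $m_n$ handle the two types of descent steps separately: the piece $\delta_n - (d_{k_n-1} - d_{k_n})$ forces the gap to remain inside an overlap $Z^0_{\cdot}$ or $Z^1_{\cdot}$ at the levels $i \in \{k_{n-1}+1, \dots, k_n-1\}$ where $a_i \leq 1/3$, while the piece $4 d_{k_n} - \Delta_n$ forces it to fit inside one of the three children $J_{\cdot \ha 0}, J_{\cdot \ha 1}, J_{\cdot \ha 2}$ at the level $k_n$ where $a_{k_n} > 1/3$. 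Both bounds are needed simultaneously, and this is exactly why $m_n$ is defined as the minimum of the two.

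Given the covering statement, the main inclusion is proved by tracking points. Fix $x \in J_t \setminus \bigcup \G_t$; I build inductively extensions $s^{(n)} \succ t$ of length $n$ with $x \in J_{s^{(n)}}$, so that $x \in C_n(a) - C_n(a)$ for every $n$, and hence, by Proposition \ref{lem}(\ref{1-7}), $x \in C(a) - C(a)$. At each step, subdividing $J_{s^{(n)}}$, either $x$ falls into one of the three children (a direct choice), or $|s^{(n)}|+1 = k_j$ for some $j$ and $x$ lies in a gap $G^i_{s^{(n)}}$. In the latter case $G^i_{s^{(n)}} \notin \G_t$ (as $x \notin \bigcup \G_t$), so Lemma \ref{uw} gives $N(i, s^{(n)}) > k$; hence $J_{s^{(n)}}$ participates in an associated pair whose partner $J_u$ covers $G^i_{s^{(n)}}$. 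The covering statement then places $x$ in some $J_{u \ha r}$ at every future level, and the construction continues from $u$. The main obstacle, as anticipated, is the covering statement itself: the inductive definition of $\G^i_s(n+1)$ adjoins two new gaps $\bar{G}^1_{t \ha i}(n+1)$ and $\bar{G}^0_{t \ha (i+1)}(n+1)$ for every previously recorded $G^i_t$, and one must verify that neither escapes the current descendant of $J_u$; this is exactly what the two inequalities encoded in $(*)$ ensure, and the bookkeeping involved in the case split by Lemma \ref{lemat 2} and by which of the two newly added gaps is being treated is where essentially all the work of the proof goes.
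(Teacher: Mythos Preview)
Your covering statement is a reasonable target (and in fact it is a consequence of the paper's argument, since every covering interval produced in its cascade extends $u$), but the proof you sketch does not go through. The inductive hypothesis that each gap $G\in\G^i_s$ lies in a \emph{single} descendant $J_{u\ha r}$ with $|r|=n$ is too strong: $G$ has fixed positive length $d_{k_j-1}-3d_{k_j}$ (where $j$ is the rank of $G$), while $|J_{u\ha r}|=2d_{|u|+n}\to 0$, so for large $n$ the hypothesis is simply false. Concretely, in the setting of Corollary~\ref{wniosek} with $a_1=\tfrac{1}{35}$, $a_2=\tfrac{7}{17}$ one has $d_1=\tfrac{17}{35}$, $d_2=\tfrac17$, and the rank-$m$ gap $G^0_s$ has length $d_2^{\,m-1}(d_1-3d_2)=\tfrac{2}{35}\,d_2^{\,m-1}$, which already exceeds $2d_{k_{m+1}}=2d_2^{\,m+1}=\tfrac{2}{49}\,d_2^{\,m-1}$. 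Thus your claim that the two pieces of $m_n$ force the gap ``to remain inside an overlap $Z^i_{\cdot}$'' or ``to fit inside one of the three children'' cannot be right: the gap is larger than both. Weakening to a point-wise hypothesis does not help either, since to pass a point through level $k_l$ inside $J_u$ you would need exactly the information that the paper's Claim~3 supplies.

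The paper uses a different device. It first proves (Claim~2), by induction on the rank, that every gap $G^i_s$ with $t\prec s$ and $G^i_s\notin\G_t$ admits a covering interval $J_u$ with $t\prec u$; your appeal to Lemma~\ref{uw} is not enough here, because $N(i,s)>k$ only yields an associated partner at the current level when $N(i,s)>k_{n-1}$, and the case $N(i,s)=k_l$ for some earlier $l$ must be reduced to a lower-rank gap. It then proves (Claim~3), via Lemma~\ref{lemat 2} and the drift formulas~(\ref{lG})--(\ref{rG}), that no gap of $\G^i_s$ meets any gap of $\G^0_u\cup\G^1_u$; this is precisely where the bound $m_n\ge 2\sum_{i>n}(d_{k_i-1}-d_{k_i})$ enters, and it is a statement about \emph{pairs} of gaps, not about a single gap fitting anywhere. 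The main inclusion is then obtained by a cascade: if $x\in J_t\setminus\bigcup\G_t$ falls into a gap $G^i_s\notin\G_t$, pass to the covering $J_u$; if inside $J_u$ the point lands in a further gap $G^z_q$, Claim~3 gives $G^z_q\notin\G_u$, so Claim~2 (now with $u$ in place of $t$) supplies a deeper covering $J_V$ with $u\prec V$; the strictly increasing level makes the cascade terminate at the target depth. Finally, once the main inclusion gives nonempty interior, the paper concludes ``Cantorval'' in one line from the trichotomy of Theorem~\ref{AI} together with Theorem~\ref{tw1}(\ref{tw1-2}), rather than by verifying accumulation at gap endpoints directly.
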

\begin{proof}
In the beginning, we will prove

{\bf Claim 1.}
If $n \geq m$, $s, u \in \{0,1,2\}^{k_n-1}$, $t \prec s$, $G^0_s \subset_* J_u$, 
%(that is, $J_u$ is the interval covering $\G^0_s$, and $J_s$ is the interval covering $\G^1_u$), 
then \\$G^0_s \subset J_{u \ha 2}$, $G^1_u \subset J_{s \ha 0}$ and $l(G^0_s)-r(G^1_u) \geq m_n$,
%2\cdot\sum_{i=n+1}^{\infty}(d_{k_i-1}-d_{k_i})$,
$r(J_u)-r(G^0_s)\geq m_n$, $l(G^1_u)-l(J_s)\geq m_n$. 

By the definition of a covering interval, we know that $N= N(0,s) > k_{n-1}$ and 
$$u = (s|(N-1)) \,\hat{}\,(s_N-1) \,\hat{}\, 2^{(k_n-N-1)} \in \{0,1,2\}^{k_n-1}. $$  
From Lemma \ref{lem} (\ref{1-2}) and (\ref{1-4}) we have
$$r(J_{u}) = r(J_{(s|(N-1)) \,\hat{}\,(s_N-1)})=r(Z^{s_N-1}_{(s|(N-1))}) = l(Z^{s_N-1}_{(s|(N-1))})+|Z^{s_N-1}_{(s|(N-1))}|$$$$ = l(J_{s|N})+3d_N-d_{N-1}= l(J_s)+3d_N-d_{N-1}.$$ 
Hence, by Lemma \ref{lem} (\ref{1-3}), (\ref{1-2a}) and (\ref{1-1}),
$$r(J_{u}) - r(G^0_{s}) =l(J_{s}) + 3d_{N}-d_{N-1} - l(J_{s\,\hat{}\,1})=l(J_{s}) + 3d_{N}-d_{N-1} - (l(J_{s}) + d_{k_n-1}-d_{k_n})$$$$=3d_{N}-d_{N-1} - d_{k_n-1}+d_{k_n} \geq \delta_n - d_{k_n-1}+d_{k_n} \geq m_n $$
%\geq 2\cdot\sum_{i=n+1}^{\infty}(d_{k_i-1}-d_{k_i}),$$
$$l(G^0_{s}) - r(G^1_{u}) =r(J_{s\,\hat{}\,0}) - l(J_{u\ha 2})= l(J_{s}) +2d_{k_n} - (r(J_u) -2d_{k_n})=l(J_{s}) +2d_{k_n} - (l(J_{s}) + 3d_{N}-d_{N-1} -2d_{k_n})  $$$$=4 d_{k_n} - (3d_{N}-d_{N-1}) \geq 4 d_{k_n} - \Delta_n \geq m_n $$
%\geq 2\cdot\sum_{i=n+1}^{\infty}(d_{k_i-1}-d_{k_i}), $$
$$l(G^1_{u}) - l(J_{s}) =r(J_{u\,\hat{}\,1}) - l(J_s) =r(J_{u})-d_{k_n-1}+d_{k_n} - l(J_s)$$$$=3d_{N}-d_{N-1} - d_{k_n-1}+d_{k_n}\geq\delta_n - d_{k_n-1}+d_{k_n} \geq m_n$$ 
%\geq 2\cdot\sum_{i=n+1}^{\infty}(d_{k_i-1}-d_{k_i}).$$

In consequence, since $m_n > 0$, we have 
$$l(J_{u\ha 2}) = r(G^1_u) < l(G^0_s) < r(G^0_s) < r(J_u) = r(J_{u\ha 2}),$$
so $G^0_s \subset J_{u \ha 2}$. Similarly, $G^1_u \subset J_{s \ha 0}$, which finishes the proof of Claim 1.

Now, we will prove inductively

{\bf Claim 2.} 
For any $n \geq m$

$(**)$ if
$t \prec s \in \{0,1,2\}^{k_n-1}$ and $i \in \{0,1\}$ are such that \begin{equation} \label{war}
G^i_s \notin \G_{t},
\end{equation}
then there exists a a finite sequence $u$ with terms in $\{0,1,2\}$ such that $t \prec u$ and $G^i_s \subset_* J_u$.

%
%Observe that for any $s \in \{0,1,2\}^{k_m-1}$, $t \prec s$, $i \in \{0,1\}$ such that $G^i_s$ satisfies (\ref{war}) we have $N(i,s)\geq k+1$. Indeed, if $N(i,s) \leq k$, then $G^i_s = G^i_{t\ha (2i)^{(k_{m}-1-k)}} \in \G_t.$ Hence there is an interval $J_u$ covering $G^i_s$, where $u \in \{0,1,2\}^{k_m-1}, t \prec u$ are as in the definition.

First, observe that $(**)$ is trivially satisfied for $n = m$, because there does not exist a gap satisfying (\ref{war}). Let $n \geq m$ and assume that $(**)$ holds for all $j=m, \dots,n$. We will show that  $(**)$ holds for $n+1$. Assume that $s \in \{0,1,2\}^{k_{n+1}-1}$, $t \prec s$ and $i \in \{0,1\}$ are such that $G^i_s \notin \G_t.$ Put $N: = N(i,s)$. By Lemma \ref{uw}, $N > k$.

Consider the cases. 

1. $N \geq k_n +1.$ 

By the definition, there exists the interval $J_u$, where $u \in \{0,1,2\}^{k_{n+1}-1},$ $t \prec u$, associated with $J_s$. $J_u$ is an interval covering $\G^i_s$, and so $G^i_s \subset_* J_u$.

2. $k_{j-1} < N < k_j$ for $m \leq j \leq n$.

Then $s = s|N \ha (2i)^{(k_{n+1}-N-1)} = s|N \ha (2i)^{(k_j-1-N)} \ha (2i)^{(k_{n+1}-k_j)} =s|(k_j-1) \ha (2i)^{(k_{n+1} - k_j)}$. Therefore, $G^i_s \in \G^i_{s|(k_j-1)}$. Moreover, $N(i,s|(k_j-1)) > k_{j-1}$, so, by the definition, there is an interval $J_u$ covering $\G^i_{s|(k_j-1)}$, where $t \prec u$, and in consequence also covering $G^i_s$.  

3.1. $N=N(0,s)=k_j$ for some $m+1 \leq j \leq n$ and $i=0$.

Consider the gap $G^{s_N-1}_{s|(k_j-1)} $. If $G^{s_N-1}_{s|(k_j-1)} $ satisfies (\ref{war}), then, by the induction hypothesis, there exists an interval $J_u$ covering it, where $t \prec u$, and so there exists a family of gaps $\G$ such that $G^{s_N-1}_{s|(k_j-1)} \in \G$ and $J_u$ is an interval covering the family $\G$. We have $s= (s|(k_j-1))\ha (s_N-1+1)\ha 0^{(k_{n+1}-k_j -1)}$, so $G^0_s \in \G$, and thus $J_u$ is an interval covering also $G^0_s$. If $G^{s_N-1}_{s|(k_j-1)}  \in \G_t =  \G^0_{t \ha 0^{(k_m-k-1)}} \cup \G^1_{t \ha 2^{(k_m-k-1)}}$, then since $s= (s|(k_j-1))\ha (s_N-1+1)\ha 0^{(k_{n+1}-k_j -1)}$, also $G^i_s \in \G^0_{t \ha 0^{(k_m-k-1)}} \cup \G^1_{t \ha 2^{(k_m-k-1)}} = \G_t$.
 
3.2. $N=N(1,s)=k_j$, for some $j \leq n$ and $i=1$.

Consider the gap $G^{s_N+1}_{s|(k_j-1)}$. If $G^{s_N-1}_{s|(k_j-1)} $ satisfies (\ref{war}), then there is an interval $J_u$ covering it, where $t \prec u$. Similarly as in 3.1., $J_u$ is an interval covering also $G^1_s$, because $s= (s|(k_j-1))\,\hat{}\,(s_N+1-1)\,\hat{}\,2^{(k_{n+1}-k_j -1)}$. If $G^{s_N+1}_{s|(k_j-1)}  \in \G_t$, then also $G^i_s \in \G_t$. This finishes the proof of Claim 2.

Now, we will prove

{\bf Claim 3.} For all $n \in \N, u \in \{0,1,2\}^{k_n-1}, s \in \{0,1,2\}^{k_n-1}$ if $J_u$ and $J_s$ are associated intervals with $J_u$ being an interval covering $\G^0_s$, then for any gaps $G \in \G^0_s, H \in \G^1_u$ we have $G \cap H = \emptyset$. If $D \in \G^1_s, F \in \G^0_u$, then also $D \cap H = \emptyset$, $G \cap F = \emptyset$ and $D \cap F = \emptyset$.

Let $n \in \N$, $u \in \{0,1,2\}^{k_n-1}, s \in \{0,1,2\}^{k_n-1}$ be such that $G^0_s \subset_* J_u$. Let $G$ be a gap of rank $j$ and $H$ a gap of rank $l$, where $j, l \geq n$, such that $G \in \G^0_s, H \in \G^1_u$. 
Without loss of generality we can assume that $j \geq l$ (the proof when $j < l$ is analogous). 
By Lemma \ref{lemat 2}, we have
$$r(G) \leq r(G^0(s \ha 0,j)) \mbox{ or } (l(G^1(s\ha 0, j)) \leq l(G) < r(G) \leq r(G^0(s, j)))$$ 
and
$$l(H) \geq l(G^1(u \ha 2,l)) \mbox{ or } ( l(G^1(u, l)) \leq l(H) < r(H) \leq r(G^0(u \ha 2, l))).$$
Consider the cases.

1. $r(G) \leq r(G^0(s \ha 0,j))$. Since, by definition and Lemma \ref{lem} (\ref{1-2a}), $l(G^1(u \ha 2,l)) \geq  l(G^1(u, l))$, we have $l(H) \geq l(G^1(u, l))$, and hence, using Lemma \ref{mon}, $(*)$, Claim 1 and Lemma \ref{lem} (\ref{1-2}), (\ref{1-3}), we obtain
$$l(H) - r(G) \geq l(G^1(u, l)) - r(G^0(s \ha 0,j)) = r(J_u) - \sum_{i=n}^l (d_{k_i-1}-d_{k_i}) - l(J_{s \ha 0}) - \sum_{i=n+1}^j (d_{k_i-1}-d_{k_i}) $$$$\geq r(J_u)- d_{k_n-1} + d_{k_n} - l(J_s) - 2 \sum_{i=n+1}^j(d_{k_i-1}-d_{k_i}) =r(J_{u\ha 1}) - l(J_s) - 2 \sum_{i=n+1}^j(d_{k_i-1}-d_{k_i})$$$$=l(G^1_u) - l(J_s) - 2 \sum_{i=n+1}^j(d_{k_i-1}-d_{k_i})  > l(G^1_u) - l(J_s) - 2 \sum_{i=n+1}^{\infty}(d_{k_i-1}-d_{k_i}) \geq l(G^1_u) - l(J_s) - m_n \geq 0.$$
Therefore, $G \cap H = \emptyset$. 

2. $l(H) \geq l(G^1(u \ha 2,l))$. Similarly as in 1., $r(G) \leq r(G^0(s, j))$, and thus
$$l(H) - r(G) \geq l(G^1(u \ha 2,l)) - r(G^0(s, j)) = r(J_u) - \sum_{i=n+1}^l (d_{k_i-1}-d_{k_i}) - l(J_{s}) - \sum_{i=n}^j (d_{k_i-1}-d_{k_i})$$$$ \geq r(J_u) - r(G^0_s) - 2 \sum_{i=n+1}^j(d_{k_i-1}-d_{k_i}) > r(J_u) - r(G^0_s) - 2 \sum_{i=n+1}^{\infty}(d_{k_i-1}-d_{k_i})\geq r(J_u) - r(G^0_s) - m_n \geq 0.$$
Hence $G \cap H = \emptyset$. 

3. $l(G) \geq l(G^1(s\ha 0, j)) \mbox{ and }r(H) \leq r(G^0(u \ha 2, l))$.
Then
$$l(G) - r(H) \geq l(G^1(s\ha 0, j)) - r(G^0(u \ha 2, l)) = r(J_{s\ha 0}) - \sum_{i=n+1}^j (d_{k_i-1}-d_{k_i}) - l(J_{u\ha 2}) - \sum_{i=n+1}^l (d_{k_i-1}-d_{k_i})$$$$ \geq l(G^0_s) - r(G^1_u) - 2 \sum_{i=n+1}^j(d_{k_i-1}-d_{k_i}) > l(G^0_s) - r(G^1_u) - 2 \sum_{i=n+1}^{\infty}(d_{k_i-1}-d_{k_i}) \geq l(G^0_s) - r(G^1_u)-m_n\geq 0.$$
Consequently, $G \cap H = \emptyset$. 

Now, let $D \in \G^1_s, F \in \G^0_u$. Put $N:=N(1,u)$. Of course, $N < k_n$. Since $d_{k_n} =d_{k_{n-1}}\cdot \frac{1-a_n}{2} < \frac{d_{k_{n-1}}}{2}$, we have $d_{k_n-1} - d_{k_n} > d_{k_n}$.
Thus, using the definition of covering intervals and Lemma \ref{lem} (\ref{1-2}), (\ref{1-2a}), (\ref{1-2b}), we get
$$l(G) \geq l(J_s) = l(J_{u|(N-1)\ha (u_{N}+1)}) \geq l(J_u) + d_{N-1} - d_{N} \geq  l(J_u) + d_{k_n-1}-d_{k_n} > l(J_u) + d_{k_{n-1}}.$$
Similarly, 
$$r(H) \leq r(J_s) - d_{k_{n-1}}.$$
Moreover, if $D$ is a gap of rank $p \geq n$, then
$$l(D) \geq l(G^1(s,p)) \geq r(J_s) - \sum_{i=n}^{\infty} (d_{k_i-1}-d_{k_i})\geq r(J_s) - \sum_{i=k_n}^{\infty} (d_{i-1}-d_{i}) = r(J_s) - d_{k_{n-1}} \geq r(H) \geq r(F)$$
$$r(F) \leq r(G^0(u,p)) \leq l(J_u) + \sum_{i=n}^{\infty} (d_{k_i-1}-d_{k_i}) \leq l(J_u) + \sum_{i=k_n}^{\infty} (d_{i-1}-d_{i}) = l(J_u) + d_{k_{n-1}}\leq l(G)\leq l(D),$$
so $D \cap H = \emptyset$, $G \cap F = \emptyset$ and $D \cap F = \emptyset$.
This finishes the proof of Claim 3.  
%Z powyższych rachunków mamy również, że $G \subset J_{u \ha 1} \cup J_{u \ha 2}$, i $H \subset J_{u \ha 1} \cup J_{u \ha 0}$. 

We will now prove that \begin{equation} \label{zaw}
J_t \setminus \G_t \subset \bigcap_{n \in \N} \bigcup_{t \prec u, u \in \{0,1,2\}^{k_n}} J_u.
\end{equation}
It suffices to show that for any $n \geq m$ we have
\begin{equation}\label{5*}
J_t \setminus \bigcup_{j=m}^n(\G^0_{t \ha 0^{(k_m-k-1)}}(j) \cup \G^1_{t \ha 2^{(k_m-k-1)}}(j)) \subset \bigcup_{t \prec u, u \in \{0,1,2\}^{k_n}} J_u.
\end{equation}

Let $x \in J_t \setminus(\G^0_{t \ha 0^{(k_m-k-1)}}(m) \cup \G^1_{t \ha 2^{(k_m-k-1)}}(m))$. Since $a_j \leq \frac{1}{3}$ for $j = k+1, \dots, k_m-1$, we have $J_t = \bigcup_{t \prec u, u \in \{0,1,2\}^{k_m-1}} J_u$. Hence $x \in J_s$ for some $ s \in \{0,1,2\}^{k_m-1}, t\prec s$. Further we get $x \in (J_{s\ha 0} \cup J_{s\ha 1}\cup J_{s\ha 2}) \cup (G^0_s \cup G^1_s).$ If $x \in (J_{s\ha 0} \cup J_{s\ha 1}\cup J_{s\ha 2})$, then $x \in \bigcup_{t \prec u, u \in \{0,1,2\}^{k_m}} J_u$. If $ x \in G^i_s$ for $i \in \{0,1\}$ and $G^i_s \notin (\G^0_{t \ha 0^{(k_m-k-1)}}(m) \cup \G^1_{t \ha 2^{(k_m-k-1)}}(m))$, then $N(i,s) \geq k+1$ (Lemma \ref{uw}). Then, by the definition, there exists an interval $J_u$, where $u \in \{0,1,2\}^{k_m-1}, t \prec u$, associated with $J_s$ and covering $\G^i_s$. In particular, $G^i_s \subset_* J_u$. If $i = 0$, then, by Claim 1, we have $G^0_s \subset J_{u \ha 2}$, and if $i = 1$, then $G^1_s \subset J_{u \ha 0}$. Therefore, $x \in \bigcup_{t \prec u, u \in \{0,1,2\}^{k_m}} J_u$.

Let $n \geq m$. Assume that (\ref{5*}) holds for $n$.

We will show that (\ref{5*}) holds for $n+1$.

Let $x \in J_t \setminus \bigcup_{i=m}^{n+1}(\G^0_{t \ha 0^{(k_n-k-1)}}(n+1) \cup \G^1_{t \ha 2^{(k_n-k-1)}}(n+1))$. Then, by the induction hypothesis, we have $x \in \bigcup_{t \prec u, u \in \{0,1,2\}^{k_n}} J_u.$
Since $a_j \leq \frac{1}{3}$ for $j = k_n+1, \dots, k_{n+1}-1$, by Lemma \ref{lem} (\ref{1-6}), we have $x \in \bigcup_{t \prec u, u \in \{0,1,2\}^{k_{n+1}-1}} J_u$. Thus, $x \in J_s$ for some $ s \in \{0,1,2\}^{k_{n+1}-1}, t\prec s$. Further, we have $x \in (J_{s\ha 0} \cup J_{s\ha 1}\cup J_{s\ha 2}) \cup (G^0_s \cup G^1_s).$ If $x \in (J_{s\ha 0} \cup J_{s\ha 1}\cup J_{s\ha 2})$, then $x \in \bigcup_{t \prec u, u \in \{0,1,2\}^{k_{n+1}}} J_u$. If $ x \in G^i_s$ for $i \in \{0,1\}$ and $G^i_s \notin \bigcup_{j=m}^{n+1}(\G^0_{t \ha 0^{(k_m-k-1)}}(j) \cup \G^1_{t \ha 2^{(k_m-k-1)}}(j))$, then from Claim 2 we infer the existence of an interval $J_u$ covering $G^i_s$ such that $t \prec u$, $u \in \{0,1,2\}^{k_j-1}, m \leq j \leq n+1$, and so $J_u$ is an interval covering some family $\G^h_w$, where $h \in \{0,1\}, w \in \{0,1,2\}^{k_j-1}$ and $G^i_s \in \G^h_w$. Hence $x \in J_u$. We have $x \in J_v$ for some $u \prec v$, $v \in \{0,1,2\}^{k_{n+1}}$ or $x \in G^z_q$ for some $z \in \{0,1\}, u \prec q, q \in \{0,1,2\}^{k_l-1}, j \leq l \leq n+1$. In the first case we get the assertion. If $x \in G^z_q$, then $G^z_q \cap G^i_s \neq \emptyset$, so, by Claim 3, we have $G^z_q \notin \G^{1-h}_u$ and $G^z_q \notin \G^{h}_u$. Therefore, by Claim 2, there exists an interval $J_V$, $V \in \{0,1,2\}^{k_f-1}$, $j+1 \leq f \leq n+1$, $u \prec V$ covering $G^z_q$. Since $f > j$, then, repeating the above reasoning finitely many times, we will finally obtain the assertion or that for some gap $G^M_S$, $x \in G^M_S$, where $S \in \{0,1,2\}^{k_{n+1}-1},$ $M \in \{0,1\}$ and there is an interval $J_U$ covering $G^M_S$, where $U \in \{0,1,2\}^{k_{n+1}-1}.$ Then, by Claim 1, $G^M_S \subset J_{U \ha (2(1-M))}$. Thus, $$x \in \bigcup_{t \prec u, u \in \{0,1,2\}^{k_{n+1}}} J_u.$$
By the induction, we obtain (\ref{zaw}).

Of course, $\bigcup_{t \prec u, u \in \{0,1,2\}^{k_n}} J_u \subset C_n(a)-C_n(a)$. Therefore, $J_t \setminus  \G_t = J_t \setminus (\G^0_{t \ha 0^{(k_m-k-1)}} \cup \G^1_{t \ha 2^{(k_m-k-1)}}) \subset C(a)-C(a)$. Moreover, by Theorem \ref{tw4}, $J_t \setminus \G_t$ has nonempty interior. Thus, $C(a)-C(a)$ is a Cantorval.

If $t=\emptyset $,
then, by Theroem \ref{tw4}, we have $C\left( a \right) -C\left( a
\right) \subset \left[ -1,1\right] \setminus \bigcup \G_t$. Hence
$C\left( a \right) -C\left( a \right) =\left[ -1,1\right]
\setminus \bigcup \G_t$. Using Theroem \ref{tw4} again, we obtain
\begin{equation*}
\left\vert C\left( a \right) -C\left( a \right) \right\vert
=2-\left\vert  \bigcup \G_t \right\vert =2-2\sum_{n=1}^{\infty
}3^{n-1}\left( d_{k_{n}-1}-3d_{k_{n}}\right) .
\end{equation*}
\end{proof}

\begin{corollary}\label{wniosek}
Let $a=(a_1,a_2,a_1,a_2,\dots)$, where $a_1<\frac{1}{3},a_2>\frac{1}{3}$. If $a_1\leq\frac{1}{35}$ and 
%$$a_2 \leq \frac{a_1^2+4a_1-5+\sqrt{a_1^4+32a_1^3-34a_1^2-32a_1+33}}{2(a_1^2-2a_1+1)}$$
$$a_2 \leq \frac{-a_1-5+\sqrt{a_1^2+34a_1+33}}{2-2a_1}$$
or $a_1 \in (\frac{1}{35}, \frac{6\sqrt{5}-13}{11})$ and
%$$a_2 \leq \frac{1+2a_1-3a_1^2-4\sqrt{a_1^4-a_1^3-a_1^2+a_1}}{a^2_1-2a_1+1},$$
$$a_2 \leq \frac{3a_1+1-4\sqrt{a_1^2+a_1}}{1-a_1},$$
then the set $C(a)-C(a)$ is a Cantorval.
\end{corollary}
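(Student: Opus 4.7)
The strategy is to verify condition $(*)$ of Theorem \ref{main} for the periodic sequence $a=(a_1,a_2,a_1,a_2,\ldots)$, and deduce that $C(a)-C(a)$ is a Cantorval. Since $a_1<\tfrac13$ one may take $k_0=0$, in which case the indices $n>0$ with $a_n>\tfrac13$ are exactly the even ones, so $k_n=2n$. The only index strictly between $k_{n-1}$ and $k_n$ is $2n-1$, where $a_{2n-1}=a_1<\tfrac13$, so each of the sets defining $\delta_n$ and $\Delta_n$ is the singleton $\{3d_{2n-1}-d_{2n-2}\}$. Hence $\delta_n=\Delta_n=3d_{2n-1}-d_{2n-2}$.

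Writing $\lambda_j:=(1-a_j)/2$, one computes $d_{2n-2}=(\lambda_1\lambda_2)^{n-1}$, $d_{2n-1}=\lambda_1(\lambda_1\lambda_2)^{n-1}$ and $d_{2n}=(\lambda_1\lambda_2)^n$. Substituting these into the definition of $m_n$ and into the tail sum $2\sum_{i>n}(d_{2i-1}-d_{2i})$ (a geometric series), both sides of $(*)$ share the common factor $(\lambda_1\lambda_2)^{n-1}$ and it cancels. Thus $(*)$ is independent of $n$, and reduces to the single inequality $\min(A,B)\geq C$, where
$$A:=2\lambda_1-1+\lambda_1\lambda_2,\quad B:=1+4\lambda_1\lambda_2-3\lambda_1,\quad C:=\frac{2\lambda_1^2\lambda_2(1-\lambda_2)}{1-\lambda_1\lambda_2}.$$

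The remaining task is to translate $A\geq C$ and $B\geq C$ into the claimed explicit bounds on $a_2$. Introducing the auxiliary variable $P:=(1-a_1)(1-a_2)$ tidies the algebra: the equation $A=C$ rearranges to $P^2+8a_1P-16a_1=0$, whose positive root $P=4(\sqrt{a_1^2+a_1}-a_1)$ yields, after solving for $a_2$, exactly $a_2=\frac{3a_1+1-4\sqrt{a_1^2+a_1}}{1-a_1}$, matching the second formula. Similarly $B=C$ rearranges to $P^2+(a_1-7)P+(4-12a_1)=0$, whose smaller positive root gives $a_2=\frac{-a_1-5+\sqrt{a_1^2+34a_1+33}}{2-2a_1}$, matching the first. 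A direct substitution confirms that the two curves meet at $(a_1,a_2)=(\tfrac{1}{35},\tfrac{7}{17})$ with common value $A=B=C=\tfrac{4}{35}$; an elementary comparison then shows the $B=C$ curve is the stricter constraint on $a_2$ throughout $a_1\leq\tfrac{1}{35}$, and the $A=C$ curve is stricter for $a_1>\tfrac{1}{35}$. The cutoff $a_1<(6\sqrt{5}-13)/11$ arises from setting the second formula equal to $\tfrac13$ and solving $11a_1^2+26a_1-1=0$; beyond this value the constraint $A\geq C$ would force $a_2\leq\tfrac13$, in conflict with the standing hypothesis $a_2>\tfrac13$.

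The main obstacle is the algebraic bookkeeping in the last step: selecting the correct branch of each quadratic, checking in each of the two regimes that the non-binding inequality is automatically satisfied (so that verifying only the tighter formula suffices), and matching the radical expressions to those in the statement. None of this is conceptually deep; it is routine manipulation that nevertheless requires care to execute cleanly.
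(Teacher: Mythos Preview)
Your proof is correct and follows the same strategy as the paper: identify $k_n=2n$, compute $\delta_n=\Delta_n=3d_{2n-1}-d_{2n-2}$, observe that both sides of $(*)$ scale by $(\lambda_1\lambda_2)^{n-1}$ so the condition reduces to $n=1$, and then translate the resulting pair of inequalities into the stated bounds on $a_2$. The paper carries out the reduction identically (writing $m_n=d_2^{\,n-1}m_1$ and the tail as $d_2^{\,n}\sum_{i\ge 1}(d_{2i-1}-d_{2i})$), but then dispatches the final algebraic step in one line ``After calculations (using a computer)''; your write-up supplies what the paper omits --- the substitution $P=(1-a_1)(1-a_2)$, the explicit quadratics $P^2+8a_1P-16a_1=0$ and $P^2+(a_1-7)P+4-12a_1=0$, the crossover at $(\tfrac{1}{35},\tfrac{7}{17})$, and the derivation of the cutoff $(6\sqrt{5}-13)/11$ from $11a_1^2+26a_1-1=0$.
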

\begin{proof}
Observe that for all $n \in \N$, $k_n = 2n$ and 
$$d_{2n} = \left(\frac{1-a_1}{2}\cdot \frac{1-a_2}{2}\right)^n = d_2^n,$$
$$d_{2n-1} = \left(\frac{1-a_1}{2}\cdot \frac{1-a_2}{2}\right)^{n-1}\cdot\frac{1-a_1}{2} = d_2^{n-1}\cdot d_1,$$
and
$$\delta_n=\Delta_n = 3d_{2n-1}-d_{2n-2}.$$
Hence
$$m_n = \min \{\delta_{n}-(d_{k_{n}-1}-d_{k_{n})}, 4d_{k_{n}}-\Delta_{n} \} = \min\{3d_{2n-1}-d_{2n-2}-(d_{2n-1}-d_{2n}), 4d_{2n}-3d_{2n-1}+d_{2n-2}\}=  $$$$=\min\{d_{2n}+2d_{2n-1}-d_{2n-2}, 4d_{2n}-3d_{2n-1}+d_{2n-2}\}
$$$$=d_2^{n-1} \cdot \min\{d_{2}+2d_{1}-1, 4d_{2}-3d_{1}+1\} = d_2^{n-1} \cdot m_1. $$ Moreover,
$$\sum_{i=n+1}^{\infty}(d_{k_i-1}-d_{k_i})=\sum_{i=n+1}^{\infty}(d_{2i-1}-d_{2i})= d_2^{n}\cdot \sum_{i=1}^{\infty}(d_{2i-1}-d_{2i}).$$
Hence to use Theorem \ref{main} it suffices to show that $m_1 \geq 2d_2\cdot \sum_{i=1}^{\infty}(d_{2i-1}-d_{2i})$. Calculate
$$\sum_{i=1}^{\infty}(d_{2i-1}-d_{2i})=\sum_{i=1}^{\infty}d_2^{i-1}(d_{1}-d_{2})= \frac{d_1-d_2}{1-d_2}.$$
After calculations (using a computer) we obtain that the system of the inequalities
$$ d_{2}+2d_{1}-1 \geq 2d_2\frac{d_1-d_2}{1-d_2}$$
and
$$4d_{2}-3d_{1}+1 \geq 2d_2\frac{d_1-d_2}{1-d_2},$$
which are equivalent to
$$\frac{(1-a_1)(1-a_2)}{4}-a_1 \geq \frac{(1-a_1)^2(1-a_2^2)}{8-2(1-a_1)(1-a_2)} $$
and
$$(1-a_1)(1-a_2)-\frac{3(1-a_1)}{2}+1 \geq \frac{(1-a_1)^2(1-a_2^2)}{8-2(1-a_1)(1-a_2)}, $$
are satisfied if and only if the assumptions of this Corollary are satisfied.
Thus, when a sequence $a$ is as in the formulation of this Corollary, then the assumptions of Theorem \ref{main} are satisfied and $C(a)-C(a)$ is a Cantorval.
\end{proof}

The picture below shows the area described in the above corollary.
\begin{figure} [h]
\includegraphics[width=0.5\textwidth]{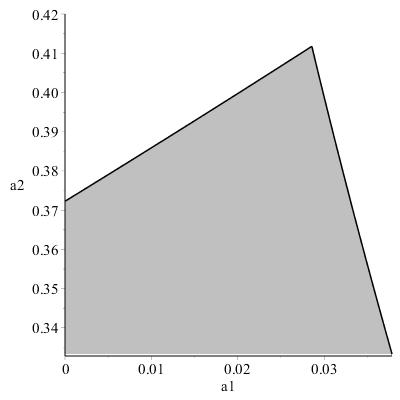}
\caption{}
\label{fig}
\end{figure}

In particular, the point at the top of the above picture is $(\frac{1}{35},\frac{7}{17}).$

In \cite{FN}, there was given another sufficient condition for a set $C(a)-C(a)$ to be a Cantorval.
%, which can be reformulated as follows.
\begin{theorem} \cite{FN}
\label{Th1}Assume that $a =\left( a _{j}\right) _{j\in \mathbb{N}%
}\in \left( 0,1\right) ^{\mathbb{N}}$ is a sequence such that: $%
a _{n}>\frac{1}{3}$\ for infinitely many terms, $a _{n}\leq \frac{%
1}{3}$\ for infinitely many terms, and $k_{0}\in \mathbb{N}\cup
\left\{ 0\right\} $ is such that $a_{k_{0}+1}<\frac{1}{3}$. Let $k \geq k_0$, $t \in \{0,1,2\}^k$ and $m \in \N$ be such that $k_{m-1} \leq k < k_m$, where the sequence $\left( k_n \right)$ consists of all indices greater than $k_0$, for which $a_{k_n} > \frac{1}{3}$. Denote
$$ \delta _n := \min \{ 3d_i - d_{i-1} \colon i \in \{k_{n-1}+1, \dots, k_n-1\}\},$$
$$ \Delta _n := \max \{ 3d_i - d_{i-1} \colon i \in \{k_{n-1}+1, \dots, k_n-1\}\},$$
where $\max \emptyset =-\infty, \min \emptyset =\infty$.
Put
$$m_n' := \min \{\delta_{n-1}-(d_{k_{n}-1}-d_{k_{n})}, 4d_{k_{n}}-\Delta_{n-1},\delta_n \} $$
$$M_n' := \max \{\delta_{n-1}-(d_{k_{n}-1}-d_{k_{n})}, 4d_{k_{n}}-\Delta_{n-1},\Delta_n \} $$ for $n \in \N$.
If for any $n\in\N$ we have
$$m_n' =M_n' =\sum_{i=n}^{\infty}(d_{k_i-1}-d_{k_i}), $$
then:

\begin{enumerate}
\item \label{Th1-1}$J_{t}\setminus \bigcup \mathcal{G}_{t}\subset C\left(
a \right) -C\left( a \right) $.

\item \label{Th1-2}The set $C\left( a \right) -C\left( a \right) 
$ is a Cantorval.

\item \label{Th1-3}If $k=k_{0}=0$ and $t=\emptyset $, then $C\left( a
\right) -C\left( a \right) =J_{t}\setminus \bigcup \mathcal{G}_{t}$
and 
\begin{equation*}
\left\vert C\left( a \right) -C\left( a \right) \right\vert
=2-2\sum_{n=1}^{\infty }3^{n-1}\left( d_{k_{n}-1}-3d_{k_{n}}\right) .
\end{equation*}
\end{enumerate}
\end{theorem}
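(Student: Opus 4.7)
The plan is to mirror the three-claim structure from the proof of Theorem \ref{main}, adapted to the tight equality $m_n' = M_n' = \sum_{i=n}^{\infty}(d_{k_i-1}-d_{k_i})$. Conclusion (\ref{Th1-1}) is the workhorse: once I show $J_t \setminus \bigcup \G_t \subset C(a) - C(a)$, part (\ref{Th1-2}) follows because Theorem \ref{tw4}(\ref{tw4-1}) guarantees that $J_t \setminus \bigcup \G_t$ has nonempty interior while $\bigcup \G_t$ contributes the infinitely many gaps accumulating suitably to certify the Cantorval structure, and part (\ref{Th1-3}) then drops out of Theorem \ref{tw4}(\ref{tw4-2}) exactly as at the end of the proof of Theorem \ref{main}.

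To get (\ref{Th1-1}), I would first prove a Claim 1 analog: for an associated pair of intervals $J_s$, $J_u$ at level $n$ with $G^0_s \subset_* J_u$, use Proposition \ref{lem} to compute the three distances $r(J_u) - r(G^0_s)$, $l(G^0_s) - r(G^1_u)$, $l(G^1_u) - l(J_s)$ as combinations of $3d_N - d_{N-1}$ for the unique $N = N(0,s) \in \{k_{n-1}+1,\dots,k_n-1\}$ together with one factor of $d_{k_n-1} - d_{k_n}$ or $d_{k_n}$. The hypothesis $m_n' = M_n'$ forces $\delta_n = \Delta_n$ and pins down all three distances to expressions equal to $\sum_{i=n}^\infty (d_{k_i-1} - d_{k_i})$; in particular $G^0_s \subset J_{u\ha 2}$ and $G^1_u \subset J_{s\ha 0}$ exactly as required. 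The combinatorial Claim 2, asserting that any gap $G^i_s \notin \G_t$ admits a covering interval $J_u$ with $t \prec u$, depends only on the index bookkeeping around $N(i,s)$ and the definition of covering intervals, so it transfers verbatim.

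The main obstacle will be the Claim 3 analog---non-overlapping of gaps in $\G^i_s$ and $\G^{1-i}_u$ for associated $s, u$---because the equality condition leaves no slack and every estimate must be exact. Using Lemmas \ref{mon} and \ref{lemat 2}, I would reduce $l(H) - r(G)$ to the expression $r(J_u) - l(J_s) - \sum_{i=n}^l (d_{k_i-1} - d_{k_i}) - \sum_{i=n+1}^j (d_{k_i-1} - d_{k_i})$ and then convert $r(J_u) - l(J_s)$ into $\sum_{i=n}^\infty (d_{k_i-1} - d_{k_i})$ via Claim 1. The fact that $m_n'$ and $M_n'$ use $\delta_{n-1}$, $\Delta_{n-1}$ (shifted down by one) rather than $\delta_n$, $\Delta_n$ is precisely what supplies the extra $(d_{k_n-1}-d_{k_n})$ of slack needed to absorb a coincident-rank term when splitting off the $i=n$ summand; this is where the tight hypothesis does its real work. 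Careful casework on whether $j = l$, $j > l$, or $l > j$, and on whether the relevant $N(i,\cdot)$ falls at an interior versus a boundary position of its block, will be required. Once Claim 3 is in hand, the inductive containment $J_t \setminus \bigcup \G_t \subset \bigcap_n \bigcup_{t \prec u,\, u \in \{0,1,2\}^{k_n}} J_u$ runs verbatim as in Theorem \ref{main}, yielding (\ref{Th1-1}).
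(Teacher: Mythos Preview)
This paper does not contain a proof of Theorem~\ref{Th1}: the result is quoted from \cite{FN} (note the citation in the theorem header), and is followed only by a Remark explaining why the hypotheses stated here are equivalent to those in the original. Consequently there is no ``paper's own proof'' to compare your attempt against.

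That said, a comment on your sketch. Adapting the three-claim structure of Theorem~\ref{main} is natural, and Claim~2 does transfer verbatim since it is purely combinatorial. But your handling of the Claim~1/Claim~3 pair is suspect. For an associated pair $J_s, J_u$ at level $n$, the index $N = N(0,s)$ lies in $\{k_{n-1}+1,\dots,k_n-1\}$, so the quantity $3d_N - d_{N-1}$ appearing in the three distances of Claim~1 is controlled by $\delta_n$ and $\Delta_n$, \emph{not} by $\delta_{n-1}$ and $\Delta_{n-1}$. Under the hypothesis $m_n' = M_n'$ one does get $\delta_n = \Delta_n = \sum_{i\ge n}(d_{k_i-1}-d_{k_i})$, and hence, for instance, $l(G^1_u) - l(J_s) = \delta_n - (d_{k_n-1}-d_{k_n}) = \sum_{i\ge n+1}(d_{k_i-1}-d_{k_i})$. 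Plugging this into the case-1 estimate of Claim~3 gives $l(H) - r(G) \ge \sum_{i\ge n+1}(d_{k_i-1}-d_{k_i}) - 2\sum_{i=n+1}^{j}(d_{k_i-1}-d_{k_i})$, which is negative for large $j$. So the disjointness conclusion of Claim~3 does not follow from the inequalities as you propose; the ``extra $(d_{k_n-1}-d_{k_n})$ of slack'' you attribute to the index shift does not materialise at the place you need it.

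The likely resolution---and presumably the mechanism used in \cite{FN}---is not that gaps from $\mathcal G^0_s$ and $\mathcal G^1_u$ are disjoint, but that under the exact equalities they \emph{coincide}: a gap of $\mathcal G^0_s$ is itself a gap in $\mathcal G_u$, so when you pass from $J_s$ to the covering interval $J_u$ you lose exactly the same gap rather than a new one. That self-similar alignment, rather than a strict-inequality separation argument, is what the tight hypothesis $m_n' = M_n'$ encodes. Your inductive containment step would then need to be rewritten to exploit coincidence rather than disjointness.
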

\begin{remark}
The assumptions in \cite{FN} were slightly different, but it is not difficult to see that they are equivalent. It follows from the fact that if $m_n' = M_n'$, then also $\delta_n = m_n'$ and the sequence $(\delta_n)$ satisfies the system of equations $(3.1)$ given in \cite{FN}. On the other hand, if there is a sequence satisfying $(3.1)$ from \cite{FN}, then, from the first equality, it must be equal to the sequence $(\delta_n)$ and the other equalities imply that $\delta_n=m_n'=M_n'$. The equality $\delta_n = \sum_{i=n}^{\infty}(d_{k_i-1}-d_{k_i})$ follows from the fact that for all $n$ $\delta_{n+1} = \delta_n -(d_{k_n-1}-d_{k_n})$.
\end{remark}
It is easy to see that the assumptions of Theorem \ref{Th1} and Theorem \ref{main} cannot be satisfied at the same time. Indeed, if assumptions of Theorem \ref{Th1} are satisfied, then, since $M_n' = m_n'$ for all $n$, we have also $m_n'=m_n$. Therefore,
$$m_n=m_n' = \sum_{i=n}^{\infty}(d_{k_i-1}-d_{k_i}) < 2\sum_{i=n}^{\infty}(d_{k_i-1}-d_{k_i}),$$
so the assumption 
$m_n \geq 2\sum_{i=n}^{\infty}(d_{k_i-1}-d_{k_i})$ from Theorem \ref{main} is not satisfied.
Thus, Theorem \ref{main} gives us a new sufficient condition for the set $C(a)-C(a)$ to be a Cantorval.

\section{Final remarks and open problems}
In this section we will provide a short comparison of the obtained results to the known results on achievement sets.
Let us recall the notion of achievement sets. Let $x=\left( x_{j}\right) _{j\in \mathbb{N}}$ be
a nonincreasing sequence of positive numbers such that the series $%
\sum_{j=1}^{\infty }x_{j}$ is convergent. The set%
\begin{equation*}
E\left( x\right) :=\left\{ \sum_{j\in A}x_{j}:A\subset \mathbb{N}\right\} 
\end{equation*}%
(where $\sum_{j \in \emptyset}x_j := 0$) of all subsums of $\sum_{j=1}^{\infty }x_{j}$ is called the
achievement set of $x$. 
The $n$-th remainder of a series is denoted by 
$r_{n}:=\sum_{j=n+1}^{\infty }x_{j}$.
If $x_{n}>r_{n}$ for $n\in \mathbb{N}$, then the series
is called fast convergent.

The known relationship between central Cantor sets and the
achievement sets of fast convergent series is given by the following proposition. 

\begin{proposition}[{\protect\cite[p. 27]{FF}}]
\label{Pr2}The following conditions hold.

\begin{enumerate}
\item If $(a_n) \in (0,1)^\N$ and $\lambda_n = \frac{1-a_n}{2}$ for all $n \in \N$, then the series $\sum_{n=1}^{\infty }x_{n}$ given
by the formula%
\begin{equation}
x_{1}=1-\lambda _{1}\quad \text{and}\quad x_{n}=\lambda _{1}\cdot \ldots
\cdot \lambda _{n-1}\cdot \left( 1-\lambda _{n}\right) \text{ for }n>1,
\label{3-1}
\end{equation}%
is fast convergent, $r_0=1$ and $C\left( a \right) =E\left( x\right) $.

\item If a series $\sum_{n=1}^{\infty }x_{n}$ is fast convergent and $%
\lambda _{n}=\frac{r_{n}}{r_{n-1}}$ for $n\in \mathbb{N}$, then for all $n \in \N$, $a_n:=1-2\lambda_n \in (0,1) $ and $E\left( x\right) =r_0\cdot C\left( a \right) $.
\end{enumerate}
\end{proposition}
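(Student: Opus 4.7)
The plan is to prove both parts by first obtaining a closed form for the tails $r_n$ in terms of the $\lambda_i$, and then matching the level-$n$ intervals of the central Cantor construction to partial sums of the series indexed by $\{0,1\}^n$.

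For part (1), assume $\lambda_n=(1-a_n)/2\in(0,1/2)$. I would factor out $\lambda_1\cdots\lambda_n$ from $\sum_{j=n+1}^{n+k}x_j$ and use the telescoping identity
$$(1-\lambda_{n+1})+\lambda_{n+1}(1-\lambda_{n+2})+\cdots+\lambda_{n+1}\cdots\lambda_{n+k-1}(1-\lambda_{n+k})=1-\lambda_{n+1}\cdots\lambda_{n+k}.$$
Since $\lambda_i<1/2$, the product tends to $0$, giving $r_n=\lambda_1\cdots\lambda_n=d_n$. In particular $r_0=1$, and fast convergence follows because $x_n=d_{n-1}(1-\lambda_n)>d_{n-1}\lambda_n=d_n=r_n$.

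To establish $C(a)=E(x)$, I would prove by induction on $n$ that for every $t=(t_1,\dots,t_n)\in\{0,1\}^n$ one has $I_t=[L_t,\,L_t+d_n]$, where $L_t:=\sum_{i\le n,\,t_i=1}x_i$. The case $n=1$ is immediate from $x_1=1-\lambda_1$. For the inductive step, the middle interval removed from $I_t$ has length $a_{n+1}d_n=d_n-2d_{n+1}$, so the two surviving children are $[L_t,\,L_t+d_{n+1}]$ and $[L_t+d_n-d_{n+1},\,L_t+d_n]$. The right child has left endpoint $L_t+d_n-d_{n+1}=L_t+\lambda_1\cdots\lambda_n(1-\lambda_{n+1})=L_t+x_{n+1}=L_{t\,\hat{}\,1}$, matching the desired formula. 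Intersecting over $n$, a point of $C(a)$ is exactly a sum $\sum_{i\in A}x_i$ for a (unique) $A\subset\N$, and conversely every such sum lies in a nested sequence of intervals; this gives $C(a)=E(x)$.

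For part (2), I would rescale by $r_0$. Fast convergence means $x_n>r_n$, i.e.\ $r_{n-1}>2r_n$, so $\lambda_n=r_n/r_{n-1}<1/2$ and $a_n=1-2\lambda_n\in(0,1)$. Set $y_n:=x_n/r_0$; the rescaled tails $r_n'=r_n/r_0$ satisfy $\lambda_n'=r_n'/r_{n-1}'=\lambda_n$, so the sequence $(\lambda_n)$, and hence $(a_n)$, is invariant under the rescaling. A direct computation gives $y_1=1-\lambda_1$ and, for $n>1$,
$$y_n=\frac{r_{n-1}-r_n}{r_0}=\frac{r_{n-1}}{r_0}(1-\lambda_n)=\lambda_1\cdots\lambda_{n-1}(1-\lambda_n),$$
matching the formula from (1). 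Applying (1) to $(y_n)$ yields $E(y)=C(a)$, and since $E(x)=r_0\cdot E(y)$, we obtain $E(x)=r_0\cdot C(a)$.

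The only nonroutine step is the inductive identification $I_t=[L_t,\,L_t+d_n]$; everything else is telescoping and bookkeeping. The conceptual content is that fast convergence is equivalent to $\lambda_n<1/2$, which is equivalent to $a_n\in(0,1)$, so (modulo the normalising factor $r_0$) fast convergent series with positive decreasing terms correspond bijectively at the level of parameter sequences to central Cantor sets.
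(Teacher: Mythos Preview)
The paper does not supply a proof of this proposition: it is quoted verbatim as a known result from \cite[p.~27]{FF} and is used in the final section only to translate between the language of central Cantor sets and that of achievement sets. So there is no ``paper's own proof'' to compare against.

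Your argument is correct. The telescoping computation $r_n=\lambda_1\cdots\lambda_n=d_n$ is the key identity, and from it both $r_0=1$ and fast convergence ($x_n=d_{n-1}(1-\lambda_n)>d_{n-1}\lambda_n=d_n=r_n$, using $\lambda_n<\tfrac12$) follow immediately. The inductive identification $I_t=[L_t,\,L_t+d_n]$ with $L_t=\sum_{i\le n,\,t_i=1}x_i$ is exactly the right structural fact; your computation of the right child's left endpoint as $L_t+d_n-d_{n+1}=L_t+x_{n+1}$ is clean. For the inclusion $E(x)\subset C(a)$ you could make explicit that for any $A\subset\mathbb N$ and any $n$, setting $t_i=\mathbf 1_{i\in A}$ gives $\sum_{i\in A}x_i=L_{t|n}+\sum_{i>n,\,i\in A}x_i\in[L_{t|n},\,L_{t|n}+r_n]=I_{t|n}$; this is implicit in what you wrote but worth one line. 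Part~(2) is handled correctly by rescaling: fast convergence gives $r_{n-1}>2r_n$, hence $\lambda_n<\tfrac12$ and $a_n\in(0,1)$, and the verification that $y_n=x_n/r_0$ satisfies the formula \eqref{3-1} is straightforward since $r_{n-1}/r_0=\lambda_1\cdots\lambda_{n-1}$.

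One small remark: the parenthetical ``(unique)'' for the subset $A$ representing a point of $C(a)$ is not needed for the set equality $C(a)=E(x)$ and is in fact true here (the level intervals are pairwise disjoint because $a_n>0$), but you may as well drop it since it plays no role.
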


There is an important family of sequences, called multigeometric sequences, for which achievement sets are considered. Let $n\in \mathbb{N}$, $x_{1},x_{2},\dots ,x_{n}\in \mathbb{R}$ and $q\in
(0,1)$. The sequence 
\begin{equation*}
\left( x_{1},x_{2},\dots ,x_{n},x_{1}q,x_{2}q,\dots
,x_{n}q,x_{1}q^{2},x_{2}q^{2},\ldots ,x_{n}q^{2},\dots \right)
\end{equation*}%
is called a multigeometric sequence with the ratio $q$ and is denoted
by $\left( x_{1},x_{2},\dots ,x_{n};q\right) $.
  
Observe that since $C(a)$ is symmetric with respect to $\frac{1}{2}$, we have $C(a) = 1-C(a)$, and so $C(a)-C(a)= C(a)+C(a) - 1$, thus the topological structure of $C(a)-C(a)$ and of $C(a)+C(a)$ are the same. 

In \cite{BGM} it is shown that the set $E(3,3,2,2;\frac{1}{9})$ is a Cantorval. It is easy to see that $E(3,3,2,2;q) = E(3,2;q) + E(3,2;q).$ Moreover, if $q < \frac{1}{6}$, then $E(3,2;q)$ is fast convergent, and so $E(3,2;q)$ is a central Cantor set. It is not difficult to calculate, using Proposition \ref{Pr2}, that $E(3,2;q) = r_0 C(a)$, where $a= (a_n)$, $a_{2n-1} = \frac{1-6q}{5}$ and $a_{2n} = \frac{2-7q}{2+3q}$ for $n \in \N$. For $q = \frac{1}{9}$ we have $a_{2n-1} = \frac{1}{15}$ and $a_{2n} = \frac{11}{21}$ for $n \in \N$. Hence for such a sequence we get that $C(a) - C(a)$ is a Cantorval. This example was also considered in \cite{FN} as the conclusion follows also from \cite[Theorem 3.2]{FN}.

From \cite[Theorem 1.3]{BBFS} we can infer that if $q \in [\frac{1}{7}, \frac{1}{6})$, then  $E(3,3,2,2;q)$ is a Cantorval. In particular, $C(a) -C(a)$ is a Cantorval for $a$ defined as above with $q \in [\frac{1}{7}, \frac{1}{6})$. It is worth pointing out that all such sequences $a$ satisfy the assumptions of Theorem \ref{main}, and thus also of Corollary \ref{wniosek}, and so $(a_1,a_2)$ lie in the grey area from Figure \ref{fig}. Moreover, for $q = \frac{1}{7}$, $(a_1,a_2)$ is equal to $(\frac{1}{35},\frac{7}{17})$, that is, to the point at the top of the picture.

It is worth mentioning that from \cite[Theorem 1.3]{BBFS} we can also infer that $E(3,3,2,2;q)$ is a Cantor set for $q< \frac{1}{9}.$ However, it is still an open question what is the topological structure of $E(3,3,2,2;q)$ for $q \in (\frac{1}{9}, \frac{1}{7})$. There are just some partial results from the paper \cite{BBFS}. Namely, it is known that for almost all $q \in (\frac{1}{9}, \frac{1}{7})$, $E(3,3,2,2;q)$ has a positive Lebesgue measure, but also there is a decreasing sequence of $(q_n)$ tending to $\frac{1}{9}$ such that $E(3,3,2,2;q_n)$ is a Cantor set of measure zero. 

To solve the above problem we should find some new conditions which would imply that the set $C(a)-C(a)$ is a Cantor set. However, there is still very little methods to find such a condition, because of a complicated geometry. A great milestone could be made in this matter if the following conjecture would occur to be true. 

\begin{conjecture}
For any $a \in (0,1)^\N$ if $\emph{int}(C(a)-C(a)) \neq \emptyset$, then $0\in \emph{int}(C(a)-C(a))$.
\end{conjecture}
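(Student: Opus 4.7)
The plan is to exploit the symmetry of $C(a)-C(a)$ about $0$ together with the self-similar structure of a central Cantor set. By the trichotomy of Theorem~\ref{AI}, the hypothesis $\operatorname{int}(C(a)-C(a)) \neq \emptyset$ reduces to two cases: either $C(a)-C(a)$ is a finite union of closed intervals, or it is a Cantorval. The finite-union case can be dispatched by a clean symmetry argument: $C(a)-C(a)$ is symmetric about $0$ and contains $0$, and because $C(a)$ is perfect there are non-zero elements of $C(a)-C(a)$ arbitrarily close to $0$, so $0$ is not isolated in $C(a)-C(a)$. Consequently the connected component of $C(a)-C(a)$ containing $0$ has positive length; by symmetry this component coincides with its own reflection, so it must be a closed interval $[-\beta,\beta]$ with $\beta > 0$, giving $0 \in \operatorname{int}(C(a)-C(a))$.

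For the Cantorval case I would set up and iterate the self-similar decomposition
\[
C(a)-C(a) \;=\; \tfrac{1-a_1}{2}\bigl(C(\sigma a)-C(\sigma a)\bigr) \;\cup\; \Bigl( \pm \tfrac{1+a_1}{2} + \tfrac{1-a_1}{2}\bigl(C(\sigma a)-C(\sigma a)\bigr)\Bigr),
\]
where $\sigma a = (a_{n+1})_{n\geq 1}$. This identity shows that both nonemptiness of the interior and membership of $0$ in the interior are preserved by the shift $a \mapsto \sigma a$: when $a_1 \leq \tfrac13$ the three affine images overlap about the centre and any interior inside the middle piece already passes through $0$, whereas when $a_1 > \tfrac13$ the three pieces are disjoint and the central piece alone controls whether $0$ is interior. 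I would then try to prove by induction on the cylinder depth $n$, and via the affine maps identifying each $J_s$ at level $n$ with the central cylinder $J_{1^{(n)}} = [-d_n, d_n]$, that nonempty interior of $(C(a)-C(a))\cap J_s$ propagates to nonempty interior of $(C(a)-C(a))\cap J_{1^{(n)}}$, which by the symmetry argument above would force $0$ itself into $\operatorname{int}(C(a)-C(a))$.

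The main obstacle is that the trace $(C(a)-C(a)) \cap J_s$ is \emph{not} an affine copy of $(C(a)-C(a)) \cap J_{1^{(n)}}$ in general: past level $n$ the two cylinders sit inside global configurations of overlapping $J_u$'s governed by the covering-interval relation $\subset_*$ and the families $\G^i_u$ of the present paper, and these configurations can differ substantially. Ruling out the pathology in which the interior of $C(a)-C(a)$ is concentrated in proper components bounded away from $0$, while $0$ is approached from both sides only by gaps, appears to demand a quantitative rigidity statement in the spirit of the $m_n$, $\delta_n$, $\Delta_n$ analysis of Theorem~\ref{main}: one would want to show that any proper component of $C(a)-C(a)$ inside some $J_s$ necessarily propagates, through $\subset_*$, to a proper component crossing the centre of some $J_{1^{(n)}}$. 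Establishing this propagation seems to require combinatorial invariants finer than those employed in Theorem~\ref{main}, and is, in my view, the crux of the conjecture.
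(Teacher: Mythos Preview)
The statement you are attempting to prove is presented in the paper as an \emph{open conjecture}, not as a theorem; the paper contains no proof of it and explicitly describes it as a potential ``great milestone'' for future work. So there is no paper proof to compare your proposal against.

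As for the content of your proposal: the finite-union case is correctly handled, and in fact is routine. By Theorem~\ref{tw1}(\ref{tw1-2}) this case occurs exactly when $a_n\le\tfrac13$ for all $n>N$, and then $C(a)-C(a)=C_N(a)-C_N(a)\supset J_{1^{(N)}}=[-d_N,d_N]$, so $0$ is automatically interior. Your symmetry argument also works but is not needed here.

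The Cantorval case is the entire problem, and you do not resolve it; you yourself say so. Your shift identity is correct and does give the equivalence $0\in\operatorname{int}(C(a)-C(a))\Leftrightarrow 0\in\operatorname{int}(C(\sigma a)-C(\sigma a))$ when $a_1>\tfrac13$ (the three pieces are then disjoint near $0$), but when $a_1\le\tfrac13$ the overlap means a neighbourhood of $0$ in $C(a)-C(a)$ may be assembled from several translated pieces, so the implication you need does not follow from the shift alone. Your proposed induction on cylinder depth runs into exactly the obstacle you name: $(C(a)-C(a))\cap J_s$ is not an affine image of $(C(a)-C(a))\cap J_{1^{(n)}}$, because each cylinder receives extra mass from associated intervals $J_u$ via the relation $\subset_*$. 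Showing that a proper component somewhere in $C(a)-C(a)$ must propagate to a proper component through the centre of some $J_{1^{(n)}}$ is precisely the open question; nothing in your outline supplies the missing mechanism. In short, your proposal is an honest description of why the conjecture is hard, not a proof of it, and the paper is in the same position.
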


This would be a very useful tool as it would allow to understand better geometry of the sets $C(a)-C(a)$.
\section*{Acknowledgements}
The author would like to express his gratitude to  Tomasz Filipczak for many valuable remarks and suggestions.

This research was funded in whole by National Science Centre, Poland, Grant number: 2022/06/X/ST1/00764.
%For the purpose of Open Access, the author has applied a CC-BY public copyright licence to any Author Accepted Manuscript (AAM) version arising from this submission.
\section*{Statements and declarations}

The author have no competing interests to declare that are relevant to the content of this article.

\section*{Data Availability Statement}

The paper has no associated data.

\end{document}